\documentclass[11pt]{article}
\usepackage{amsmath,amssymb,amsthm}

\usepackage{geometry}
\usepackage{graphicx}
\usepackage{float}
\usepackage{caption}
\usepackage{hyperref}
\usepackage[T1]{fontenc}
\usepackage[utf8]{inputenc}
\theoremstyle{remark}
\usepackage{lineno}

\usepackage{array}
\usepackage{tabularx}
\theoremstyle{plain}
\newtheorem{theorem}{Theorem}
\newtheorem{proposition}[theorem]{Proposition}
\newtheorem{lemma}[theorem]{Lemma}
\newtheorem{corollary}[theorem]{Corollary}
\usepackage{listings}
\usepackage{xcolor}

\theoremstyle{definition}
\newtheorem{definition}[theorem]{Definition}

\newtheoremstyle{myremark}
{6pt}
{6pt}
{\normalfont}
{}
{\bfseries}
{.}
{0.5em}
{}

\theoremstyle{myremark}
\newtheorem{remark}{Remark}

\begin{document}
	
	\title{Minimal Dimensions of Maximal Commutative Matrix Algebras and Sharp Courter-like Bounds}
	\author{Ma\l gorzata Nowak-Kępczyk\footnote{Corresponding Author}\\
		$\rule{0pt}{13pt}$Institute of Informatics\\John Paul II Catholic University of Lublin, Poland\\email: \texttt{malnow@kul.pl}}
\date{}	
\maketitle

\begin{abstract}
	Let \(K\) be an algebraically closed field and let \(M_n(K)\) denote the algebra of \(n\times n\) matrices over \(K\). A classical problem asks for the minimal possible dimension of a maximal commutative subalgebra \(A\subseteq M_n(K)\).
	
	We determine explicit lower bounds for local maximal commutative subalgebras of \(M_n(K)\) for every nilpotency degree of the radical, refining the classical estimate of Laffey in the range relevant to Courter's phenomenon.
	
	In particular, we prove that
	\(\dim A\ge n\)
	for every maximal commutative subalgebra of \(M_n(K)\) with \(n\le13\).
	
	We show that Courter's classical example of dimension \(13\) in
	\(M_{14}(K)\) is the first possible exceptional case and attains the corresponding local bound.
	
	Finally, we introduce a stack construction which, for every \(n\ge14\),
	explicitly constructs a local maximal commutative subalgebra of
	\(M_n(K)\) of dimension strictly smaller than \(n\).
\end{abstract}

\paragraph{Keywords}
maximal commutative subalgebra; matrix algebra; minimal dimension; centralizer; local algebra; nilpotent algebra

\paragraph{MSC}
15A30, 15A27, 16S50

\section{Introduction}

Let $K$ be an algebraically closed field and let $M_n(K)$ denote the algebra of all $n\times n$ matrices over $K$. A commutative subalgebra $A\subseteq M_n(K)$ is called \emph{maximal commutative} if it coincides with its centralizer in $M_n(K)$, equivalently if $C(A)=A$.

Maximal commutative subalgebras of matrix algebras form a classical topic in linear algebra and associative algebra. Large-dimensional examples are well understood \cite{Brown1997}, while the opposite problem — the existence of maximal commutative algebras of unexpectedly small dimension — is considerably more subtle.

The algebra of diagonal matrices gives the basic example of a maximal commutative subalgebra of dimension $n$. In 1961, Courter \cite{Courter1961} constructed a maximal commutative subalgebra $A\subseteq M_{14}(K)$ with $\dim A=13<n$, showing for the first time that maximal commutative algebras may have dimension strictly smaller than the matrix size. No example in smaller dimension is known.

For $n\le 5$, maximal commutative subalgebras over an algebraically closed field are well understood  \cite{Song1999}. In contrast, for $7\le n\le 13$, neither a classification nor a general exclusion of the inequality $\dim A<n$ was previously available.

A general lower bound for dimension was proved by Laffey \cite{Laffey1983} in 1983, namely
\[\dim A>(2n)^{2/3}-1,\]
but this estimate still allows substantially smaller examples than Courter's.

In our earlier paper \cite{NowakKepczyk}, we showed that no maximal commutative subalgebras of dimension below $n$ exist in $M_6(K)$.

The aim of this paper is to understand the local structure of small
maximal commutative subalgebras and to explain the Courter phenomenon
from this perspective.

Since every commutative algebra decomposes as a direct sum of local algebras, it is natural to focus on the local case. Using the radical filtration
of the natural module \(K^n\), we obtain a canonical lower block
triangular realization for local maximal commutative algebras.

This leads to explicit local lower bounds depending on the nilpotency
degree of the radical. 

The case of radical nilpotency degree $2$ is straightforward. If $A \subseteq M_n(K)$ is a local maximal commutative algebra with $J^2=0$, where $J=\operatorname{rad}(A)$, then every product of two nilpotent elements vanishes, so maximality forces $A$ to contain the full rectangular socle block. Consequently,
\[
\dim A = 1+ab,
\qquad a+b=n,
\]
for suitable positive integers $a,b$. Since
\[
ab \ge n-1,
\]
we obtain
\[
\dim A \ge n.
\]
Thus any Courter-like phenomenon can only occur for radical nilpotency degree at least $3$.

We show that nilpotency degree \(3\) is the most
favorable case for the existence of small maximal commutative algebras.

In particular, local maximal commutative algebras with Loewy signature
\[(n_1,n_2,n_3)\]
satisfy strong restrictions, and if
\[n_1=1\quad\text{or}\quad n_3=1,\]
then necessarily
\[\dim A\ge n.\]

Our bounds show that no Courter-like algebra can occur for \(n\le 13\);
hence Courter's classical example in \(M_{14}(K)\) is the first possible
exceptional case in general.

 We introduce a stack construction which, starting from two explicit brick algebras, yields an infinite family of local maximal commutative algebras. In particular, for every $n \ge 14$, this produces a Courter-like algebra in $M_n(K)$.

Thus Courter's example is not an isolated phenomenon, but the first
member of an infinite family.

\subsection*{Courter-like algebras}

\begin{definition}
	A commutative subalgebra
	$\mathcal A\subseteq M_n(K)$
	with unity is called a \emph{maximal commutative subalgebra} if
	\[	C(\mathcal A)=\mathcal A,	\]
	where
	\[	C(\mathcal A)=\{X\in M_n(K): XA=AX \text{ for all } A\in\mathcal A\}\]
	is the centralizer of $\mathcal A$.
\end{definition}

Equivalently, $\mathcal A$ is maximal commutative if it is not properly contained in any larger commutative subalgebra of $M_n(K)$. This standard characterization will be used throughout the paper.\medskip

\begin{definition}
	Let $\mathcal A\subseteq M_n(K)	$ 	be a maximal commutative algebra with unity.
	We say that \(\mathcal A\) is \emph{local} if every element of
	\(\mathcal A\) is either invertible or nilpotent.
	
	Equivalently, \(\mathcal A\) is local if it possesses a unique maximal
	ideal, namely its Jacobson radical
	\[	\operatorname{rad}(\mathcal A),	\]
	and
	\[	\mathcal A/\operatorname{rad}(\mathcal A)\cong K.
	\]\end{definition}

\begin{remark}
	If 	$A\subseteq M_n(K)$
	is a local commutative algebra with unity, then
	\[
	A=KI_n\oplus \operatorname{rad}(A)
	\]
	as a direct sum of vector spaces.
	
	Indeed, since
	\[
	A/\operatorname{rad}(A)\cong K,
	\]
	the semisimple quotient is generated by the identity matrix.
	Thus every element of \(A\) admits a unique decomposition
	\[
	\lambda I_n+R,	\qquad	\lambda\in K,	\quad	R\in\operatorname{rad}(A).
	\]
\end{remark}

\begin{definition}[Loewy signature]
	Let $A\subseteq M_n(K)$ be a local algebra, and let
$J=\operatorname{rad}(A)$ be nilpotent of degree $r$, i.e.
	\[
	J^{r}K^n=0,	\qquad	J^{r-1}K^n\neq0.	\]	
	This gives the Loewy filtration
	\[
	K^n	\supset	JK^n\supset	J^2K^n	\supset	\cdots	\supset	J^{r-1}K^n	\supset	0.
	\]	
	Choosing complements
	\[
	J^{i-1}K^n	=	V_i\oplus J^iK^n,	\qquad	1\le i\le r,
	\]
	we obtain a vector space decomposition
	\[
	K^n	= 	V_1\oplus V_2\oplus\cdots\oplus V_r.
	\]
	
	The \emph{Loewy signature} of \(A\) is
	\[
	\operatorname{sig}_L(A)	=	(n_1,\dots,n_r),
	\]
	where
	\[
	n_i=\dim V_i	=	\dim_K\!\left(J^{i-1}K^n/J^iK^n\right).
	\]
\end{definition}\medskip

\begin{lemma}[Uniqueness of the Loewy signature]
	The Loewy signature 
	\[
	(n_1,\dots,n_r)
	\]
	is uniquely determined by the algebra
	$A\subseteq M_n(K).$
	In particular, it is independent of the choice of basis.
\end{lemma}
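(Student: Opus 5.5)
The plan is to show that every ingredient in the definition of the Loewy signature is determined by the algebra $A$ alone, and that the choice of complements $V_i$ only enters through dimensions that do not depend on it. First I will observe that $J=\operatorname{rad}(A)$ is intrinsic to $A$: it is the Jacobson radical. In the local case it is also simply the set of nilpotent elements of $A$, by the definition of a local algebra. Consequently each power $J^i$, and hence each subspace
\[
J^iK^n=\operatorname{span}\{x v : x\in J^i,\ v\in K^n\},
\]
is a subspace of $K^n$ determined by $A$ and the fixed natural module $K^n$. No basis and no complement is involved in forming it. The nilpotency degree $r$ is then also intrinsic: it is the least $r$ with $J^rK^n=0$. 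This $r$ exists because $J$ is a nilpotent ideal of a finite-dimensional algebra, so $J^r=0$ for some $r$.

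Next I will handle the complements. For any choice of $V_i$ with $J^{i-1}K^n=V_i\oplus J^iK^n$, the projection along $J^iK^n$ restricts to an isomorphism $V_i\cong J^{i-1}K^n/J^iK^n$. Therefore
\[
n_i=\dim V_i=\dim J^{i-1}K^n-\dim J^iK^n ,
\]
which is a difference of dimensions of intrinsically defined subspaces. So $n_i$ does not depend on the choice of $V_i$. Summing these differences telescopes to $\sum n_i=n$, which is a consistency check.

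Finally I will address ``independent of the choice of basis''. A change of basis replaces $A$ by $PAP^{-1}$ for some $P\in GL_n(K)$. Conjugation by $P$ is an algebra isomorphism, so it maps $\operatorname{rad}(A)$ onto $\operatorname{rad}(PAP^{-1})$ and $J^i$ onto $(PJP^{-1})^i$. Moreover,
\[
(PJP^{-1})^iK^n=P\,(J^iK^n),
\]
since $P$ is invertible and hence $PK^n=K^n$. Because $P$ is a linear isomorphism, the dimensions of all these subspaces are preserved, and so every $n_i$ and $r$ are unchanged. I do not expect a genuine obstacle here. The only point that needs care is to state clearly that $J$ is defined intrinsically, as the radical, rather than as ``strictly lower triangular parts'' in some chosen basis. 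I would make this explicit at the start of the argument.
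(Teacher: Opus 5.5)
Your proposal is correct and follows the same route as the paper. Both arguments observe that $J=\operatorname{rad}(A)$ is intrinsic, so the subspaces $J^iK^n$ are intrinsic, and that the $n_i$ are the dimensions of the successive quotients. You also make explicit two points the paper leaves implicit: $\dim V_i=\dim(J^{i-1}K^n/J^iK^n)$ for any choice of complement, and conjugation by $P\in GL_n(K)$ sends $J^iK^n$ to $P(J^iK^n)$, so all dimensions are preserved.
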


\begin{proof}
	The radical $J=\operatorname{rad}(A)$ is intrinsic to \(A\), hence so are the subspaces
	\[
	K^n\supset JK^n\supset\cdots\supset J^rK^n=0.
	\]
	Therefore the dimensions of the successive quotients
	\[
	J^{i-1}K^n/J^iK^n
	\]
	are uniquely determined.
\end{proof}

\begin{definition}[Loewy form]
	Let $A\subseteq M_n(K)$
	be a local commutative algebra with Loewy signature
	\[
	(n_1,\dots,n_r).
	\]
	
	Choosing a basis adapted to the filtration
	\[
	K^n\supset JK^n\supset\cdots\supset J^rK^n=0,
	\]
	the algebra \(A\) is represented by lower block triangular matrices with
	diagonal block sizes
	\[
	n_1,\dots,n_r.
	\]
	
	We call this realization the \emph{Loewy form} of \(A\).
\end{definition}\medskip
\begin{proposition}[Loewy form structure]
	Let \(A\subseteq M_n(K)\) be a local maximal commutative algebra with Loewy signature
	$(n_1,\dots,n_r).$
	Then in Loewy form,
	\begin{equation}\label{Loewyeq}
	A=KI+\Lambda+\Sigma,
	\end{equation}
	where
	\[	\Sigma=M_{n_r\times n_1}(K)	\]
	is the full socle block, and
	\[	\Lambda=\langle \lambda_1,\dots,\lambda_d\rangle	\]
	is generated by strictly lower block triangular matrices, linearly independent modulo \(\Sigma\). In particular,
	\[
	\dim A=1+d+n_1n_r.
	\]
\end{proposition}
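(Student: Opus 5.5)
The plan is to verify directly that every matrix supported on the lower-left $(r,1)$ block commutes with all of $A$, and then let maximality do the rest. First I dispose of the degenerate case. If $r=1$, then $J=0$ and $A=KI$, whose centralizer is all of $M_n(K)$. So $A=KI$ is maximal commutative only when $n=1$, and I exclude this trivial case; from now on $r\ge 2$. I fix a basis adapted to $K^n=V_1\oplus\cdots\oplus V_r$, with the $V_i$ ordered as in the definition, so that $J^{i-1}K^n=V_i\oplus\cdots\oplus V_r$.

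\textbf{Step 1: every element of $J$ is strictly lower block triangular.} For $N\in J$ we have $N\,J^{i-1}K^n\subseteq J^iK^n$. In block terms, $N$ maps $V_i$ into $V_{i+1}\oplus\cdots\oplus V_r$, so all diagonal and upper blocks of $N$ vanish. Two consequences follow:
\[
NK^n\subseteq JK^n=V_2\oplus\cdots\oplus V_r,\qquad NV_r=N\,J^{r-1}K^n\subseteq J^rK^n=0 .
\]

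\textbf{Step 2: $\Sigma$ lies in the centralizer.} Let $X$ be any matrix whose only nonzero block is the $(r,1)$ block; equivalently, $X$ maps $V_1$ into $V_r$ and kills $V_2\oplus\cdots\oplus V_r$. By the Remark, every $a\in A$ has the form $a=\lambda I+N$ with $N\in J$, so it suffices to show that $XN=NX=0$.
\begin{itemize}
\item $XN=0$, because the image of $N$ lies in $V_2\oplus\cdots\oplus V_r$, which $X$ annihilates.
\item $NX=0$, because the image of $X$ lies in $V_r$, which $N$ annihilates.
\end{itemize}
Hence $X\in C(A)=A$ by maximality. The space of all such $X$ is exactly $\Sigma=M_{n_r\times n_1}(K)$. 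Every $X\in\Sigma$ is nilpotent: $X^2=0$ since $r\ge2$. Therefore $\Sigma\subseteq J$.

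\textbf{Step 3: choose $\Lambda$ and count dimensions.} I choose a complement $\Lambda=\langle\lambda_1,\dots,\lambda_d\rangle$ of $\Sigma$ inside $J$. Its elements are strictly lower block triangular by Step 1, and they are linearly independent modulo $\Sigma$ by construction. Then $J=\Lambda\oplus\Sigma$. Combining this with the Remark, $A=KI\oplus J$, we get
\[
A=KI+\Lambda+\Sigma,\qquad \dim A=1+d+n_1n_r .
\]

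The only real point is Step 2. Its proof uses only the two facts $JK^n=V_2\oplus\cdots\oplus V_r$ and $J\,V_r=0$, and both come directly from the adapted basis. I expect no genuine obstacle. The one thing to be careful about is the meaning of $\Sigma$: it depends on the chosen complements $V_i$. It should therefore be defined as the set of maps $K^n\to K^n$ that factor through $K^n/JK^n\to J^{r-1}K^n$, and that set is canonical.
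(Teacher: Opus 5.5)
Your proof is correct and follows the paper's route. Like the paper, you show that every matrix supported on the $(r,1)$ block is killed by the radical on both sides, so $\Sigma\subseteq C(A)=A$ by maximality, and then you take a complement $\Lambda$ of $\Sigma$ in $\operatorname{rad}(A)$. Your extra care goes beyond the paper's argument: you exclude the trivial case $r=1$, where the dimension formula fails, and you verify explicitly that $\Sigma\subseteq J$ via $X^2=0$, which is what allows the complement to be taken inside $J$.
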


\begin{proof}
	Choose a basis adapted to the Loewy filtration, so that \(A\) is represented by lower block triangular matrices with diagonal block sizes
	$n_1,\dots,n_r.$
	Since \(A\) is local, we have
	\[	A=KI+\operatorname{rad}(A)	\]
	as a vector space, and every element of \(\operatorname{rad}(A)\) is strictly lower block triangular in this form.
	
	Let \(X\) be any matrix supported only in the bottom-left block
	\[
	M_{n_r\times n_1}(K).
	\]
	Then, by block-size considerations, for every strictly lower block triangular element \(R\in \operatorname{rad}(A)\) one has
	\[
	XR=RX=0.
	\]
	Indeed, \(X\) maps the first Loewy layer to the last one, while any radical element maps each layer strictly downward, so no nonzero block product can occur. Hence \(X\) commutes with every element of \(\operatorname{rad}(A)\). It also commutes with scalar matrices, and therefore
	\[
	X\in C(A).
	\]
	Since \(A\) is maximal commutative, \(C(A)=A\). Thus the whole bottom-left block belongs to \(A\):
	\[
	M_{n_r\times n_1}(K)\subseteq A.
	\]
	We denote this block by
	\[
	\Sigma=M_{n_r\times n_1}(K).
	\]
	
	It remains only to choose a complementary subspace to \(\Sigma\) inside \(\operatorname{rad}(A)\). Let
	\[
	\Lambda=\langle \lambda_1,\dots,\lambda_d\rangle
	\]
	be such a complement. Then the \(\lambda_i\) are linearly independent modulo \(\Sigma\), and we obtain the vector space decomposition
	\[
	A=KI+\Lambda+\Sigma.
	\]
	Consequently, we have (\ref{Loewyeq}). Moreover, \(A\) has the lower block triangular form
	\[
	A
	\subseteq
	\begin{pmatrix}
		K & 0 & 0 & \cdots & 0\\
		* & K & 0 & \cdots & 0\\
		* & * & K & \cdots & 0\\
		\vdots & \vdots & \vdots & \ddots & \vdots\\
		M_{n_r\times n_1}(K) & * & * & \cdots & K
	\end{pmatrix}. 
	\]
\end{proof}


\begin{remark}
	If
	\[
	A=A_1\oplus\cdots\oplus A_s
	\subseteq
	M_{m_1}(K)\oplus\cdots\oplus M_{m_s}(K)\subseteq M_n(K),
	\]
	then
	\[
	C(A)=C(A_1)\oplus\cdots\oplus C(A_s).
	\]
	Consequently, \(A\) is maximal commutative in \(M_n(K)\) if and only if each
	\(A_i\) is maximal commutative in \(M_{m_i}(K)\).
\end{remark}

\begin{definition}
	A \emph{Courter-like algebra} is a unital maximal commutative subalgebra
	\[A\subseteq M_n(K)	\]
	such that
	\[\dim A<n.\]
\end{definition}\medskip

\subsection{Maximal commutative algebras with radical nilpotent of degree \(3\)}

Let \(A\subseteq M_n(K)\) be a local maximal commutative algebra whose
radical is nilpotent of degree \(3\), with Loewy signature
\((n_1,n_2,n_3)\).

By the Loewy form (\ref{Loewyeq}), we may write
\(A=KI_n+\Lambda+\Sigma\), where \(\Sigma=M_{n_3\times n_1}(K)\), and
the radical generators occur only in the two off-diagonal blocks
\(\Lambda_{21}\) and \(\Lambda_{32}\).

Thus
\begin{equation}\label{Supru}
	A=
	KI_n\oplus
	\left\{
	\begin{pmatrix}
		0&0&0\\
		X_{21}&0&0\\
		X_{31}&X_{32}&0
	\end{pmatrix}
	:
	X_{21}\in\Lambda_{21},\;
	X_{31}\in\Sigma,\;
	X_{32}\in\Lambda_{32}
	\right\}.
\end{equation}
where
\[\Lambda= \Lambda_1\cup \Lambda_2=\langle \lambda_1,\dots,\lambda_d\rangle\]
denote the linear span of the nilpotent generators  occurring in the
blocks \(\Lambda_{21}\) and \(\Lambda_{32}\), 
We have
\begin{equation} \label{dim} \dim A=1+d+n_1n_3.\end{equation}

\begin{proposition}[Centralizer estimate]\label{centralizer_estimate_nil3}
	Let \(A\subseteq M_n(K)\) be a local maximal commutative algebra with radical
	nilpotent of degree \(3\) and Loewy signature \((n_1,n_2,n_3)\). Write
	\(A=KI+\Lambda+\Sigma\), where \(\Sigma=M_{n_3\times n_1}(K)\) and
	\(\dim\Lambda=d\). Then
	\[
	\dim C(A)\ge
	1+n_1n_3+n_2(n_1+n_3)-d n_1n_3.
	\]
\end{proposition}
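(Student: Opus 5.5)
The plan is to exhibit an explicit linear subspace of the centralizer of dimension at least the stated bound. The centralizer is cut out by linear equations, and we count them. We work in the Loewy form (\ref{Supru}). Consider the space $W$ of all block lower triangular matrices of the shape
\[
Y=\begin{pmatrix} \alpha I_{n_1}&0&0\\ Y_{21}&0&0\\ Y_{31}&Y_{32}&\alpha I_{n_3}\end{pmatrix}
\]
with a common scalar on the two outer diagonal blocks. More precisely, we first take $\alpha=0$ and the middle diagonal block zero, and then add $KI$ back, since scalars commute with everything. The strictly lower block triangular part has free parameters $Y_{21}\in M_{n_2\times n_1}$, $Y_{32}\in M_{n_3\times n_2}$ and $Y_{31}\in M_{n_3\times n_1}$. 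Together with the scalar this gives a space of dimension $1+n_1n_3+n_2(n_1+n_3)$.

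Next we compute the commutator of such a $Y$ (strictly lower part) with an element of $A$. It commutes automatically with $KI$. It also commutes with $\Sigma$, because every product of a strictly lower triangular matrix with a bottom-left block vanishes, exactly as in the proof of the Loewy form structure proposition. For a generator $\lambda_k$ with blocks $(L_{21},L_{32})$, the product of two strictly lower block triangular matrices lives only in the $(3,1)$ block. So
\[
[Y,\lambda_k]=\begin{pmatrix}0&0&0\\0&0&0\\ Y_{32}L_{21}-L_{32}Y_{21}&0&0\end{pmatrix}.
\]
Hence commuting with $\lambda_k$ imposes exactly $n_3n_1$ linear equations on the parameters, namely the vanishing of an $n_3\times n_1$ block. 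Since $A$ is spanned by $KI$, $\Sigma$ and $\lambda_1,\dots,\lambda_d$, the intersection $C(A)\cap W$ is the solution space of at most $dn_1n_3$ linear equations in $\dim W$ unknowns. Therefore
\[
\dim C(A)\ge \dim (C(A)\cap W)\ge 1+n_1n_3+n_2(n_1+n_3)-dn_1n_3 .
\]

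The only point needing care is the claim that the commutator of two strictly lower block triangular matrices in a three-layer filtration is concentrated in the $(3,1)$ block, with no contribution in the $(2,1)$ or $(3,2)$ blocks. This follows because each factor shifts layers down by at least one, so a product shifts down by at least two. We should also check that the description (\ref{Supru}) really has the diagonal blocks of the $\lambda_k$ equal to zero. This holds because radical elements act nilpotently on each graded piece $J^{i-1}K^n/J^iK^n$, and indeed map $J^{i-1}K^n$ into $J^iK^n$, so in an adapted basis they are strictly block lower triangular. Beyond this, the argument is a dimension count and presents no real obstacle. The bound is crude because it ignores dependencies among the $dn_1n_3$ equations, but that is all the statement requires.
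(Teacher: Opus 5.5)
Your proposal is correct and is essentially the paper's own proof: you use the same ambient space $KI$ plus all strictly lower block triangular matrices, the same computation $[Y,\lambda_k]=Y_{32}L_{21}-L_{32}Y_{21}$ concentrated in the $n_3\times n_1$ block, and the same count of at most $dn_1n_3$ linear conditions. One small blemish: your first displayed shape, with $\alpha I_{n_1}$ and $\alpha I_{n_3}$ on the outer diagonal blocks but $0$ in the middle, would add terms $\alpha L_{21}$ and $\alpha L_{32}$ (up to sign) in the $(2,1)$ and $(3,2)$ blocks of the commutator, so you must use the corrected space from your ``more precisely'' sentence ($KI$ plus strictly lower block triangular matrices), which is exactly what the paper uses.
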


\begin{proof}
	Consider the vector space \(\mathcal V\) of matrices of the form
	\[
	X=\alpha I+
	\begin{pmatrix}
		0&0&0\\
		U&0&0\\
		W&V&0
	\end{pmatrix},
	\]
	where \(U\in M_{n_2\times n_1}(K)\), \(W\in M_{n_3\times n_1}(K)\),
	\(V\in M_{n_3\times n_2}(K)\), and \(\alpha\in K\). Thus
	\[
	\dim \mathcal V=1+n_1n_3+n_2(n_1+n_3).
	\]
	
	Every element of \(\mathcal V\) commutes with \(KI\) and with the socle block
	\(\Sigma\). Hence, in order for \(X\in \mathcal V\) to commute with \(A\), it is enough
	to impose commutation with a basis
	\(\lambda_1,\dots,\lambda_d\) of \(\Lambda\).
	
	For each generator \(\lambda_i\), define a linear map
	\(T_i:\mathcal V\to M_{n_3\times n_1}(K)\) by \(T_i(X)=[X,\lambda_i]\). This is
	well-defined because the commutator has support only in the bottom-left block.
	Indeed, if
	\[
	\lambda_i=
	\begin{pmatrix}
		0&0&0\\
		A_i&0&0\\
		B_i&C_i&0
	\end{pmatrix},
	\]
	then a direct block multiplication gives
	\[
	[X,\lambda_i]=
	\begin{pmatrix}
		0&0&0\\
		0&0&0\\
		VA_i-C_iU&0&0
	\end{pmatrix}.
	\]
	Therefore \(\operatorname{rank}T_i\le n_1n_3\).
	
	Now
	\[
	C(A)\cap \mathcal V=\bigcap_{i=1}^d \ker T_i.
	\]
	Using the elementary estimate for intersections of kernels, we obtain
	\[
	\dim(C(A)\cap \mathcal V)
	\ge \dim\mathcal  V-\sum_{i=1}^d \operatorname{rank}T_i
	\ge \dim \mathcal V-dn_1n_3.
	\]
	Since \(C(A)\) contains \(C(A)\cap \mathcal V\), the desired lower bound follows.
\end{proof}\medskip

\begin{theorem}[General local bound for nilpotency degree \(3\)]
	Let \(A\subseteq M_n(K)\) be a local maximal commutative algebra with
	radical nilpotent of degree \(3\) and Loewy signature
	\((n_1,n_2,n_3)\).
	
	Then
	\begin{equation}\label{general3}
		\dim A\ge 1+n_1n_3+	\left\lceil	\frac{n_2(n_1+n_3)}{n_1n_3+1}\right\rceil.
	\end{equation}
\end{theorem}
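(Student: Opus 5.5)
The plan is to combine the centralizer estimate of Proposition~\ref{centralizer_estimate_nil3} with maximality $C(A)=A$, and then read off an inequality for $d$.

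By (\ref{dim}) we have $\dim A=1+d+n_1n_3$, where $d=\dim\Lambda$. Since $A$ is maximal commutative, $C(A)=A$. Proposition~\ref{centralizer_estimate_nil3} then gives
\[
1+d+n_1n_3=\dim A=\dim C(A)\ge 1+n_1n_3+n_2(n_1+n_3)-d\,n_1n_3 .
\]
Cancelling $1+n_1n_3$ and collecting the $d$ terms yields
\[
d\,(n_1n_3+1)\ge n_2(n_1+n_3),
\]
that is,
\[
d\ge \frac{n_2(n_1+n_3)}{n_1n_3+1}.
\]
Because $d$ is an integer, we may replace the right-hand side by its ceiling. Substituting back into $\dim A=1+d+n_1n_3$ gives exactly (\ref{general3}).

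There is essentially no obstacle here, since the work was done in the centralizer estimate. The one point to double-check is that the decomposition $A=KI+\Lambda+\Sigma$ is direct, so that $d$ really counts $\dim A-1-n_1n_3$. This holds because $\Lambda$ was chosen as a complement to $\Sigma$ in $\operatorname{rad}(A)$ and $\operatorname{rad}(A)\cap KI=0$.

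It is also worth confirming that the subspace $\mathcal V$ in that proposition is taken in the same Loewy basis. Then $C(A)\cap\mathcal V\subseteq C(A)$, and the lower bound transfers to $\dim C(A)$ itself, which is what the substitution above uses.
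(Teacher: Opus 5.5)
Your proposal is correct and essentially identical to the paper's proof: you combine the centralizer estimate of Proposition~\ref{centralizer_estimate_nil3} with $C(A)=A$ and $\dim A=1+d+n_1n_3$, rearrange to $d(n_1n_3+1)\ge n_2(n_1+n_3)$, and use that $d$ is an integer to pass to the ceiling. Your two side checks are sound points that the paper leaves implicit: the directness of $KI\oplus\Lambda\oplus\Sigma$, which rests on $\Sigma\subseteq A$ (itself a consequence of maximality), and taking $\mathcal V$ in the same Loewy basis.
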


\begin{proof}
	Write \(A=KI_n+\Lambda+\Sigma\), with \(\dim\Lambda=d\).
	
	Since \(A\) is maximal commutative, \(C(A)=A\). By (\ref{dim}) and
	Proposition~\ref{centralizer_estimate_nil3},
	\[	1+d+n_1n_3	\ge	1+n_1n_3+n_2(n_1+n_3)-d\,n_1n_3.
	\]
	
	Hence
	\[
	d(n_1n_3+1)\ge n_2(n_1+n_3),
	\]
	and therefore
	\[
	d
	\ge
	\left\lceil
	\frac{n_2(n_1+n_3)}{n_1n_3+1}
	\right\rceil.
	\]
	
	Substituting into
	\[
	\dim A=1+d+n_1n_3
	\]
	proves (\ref{general3}).
\end{proof}\medskip



\begin{corollary}\label{n1_or_n3_one}
	Let $A\subseteq M_n(K)$
	be a local maximal commutative algebra with
radical nilpotent of degree 3 and Loewy signature	$(n_1,n_2,n_3).$
	If	$n_1=1$ or $n_3=1,$ then $\dim A\ge n.$
\end{corollary}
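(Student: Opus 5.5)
Apply the general local bound (\ref{general3}) directly; the symmetric roles of $n_1$ and $n_3$ let us treat both cases at once. Suppose $n_1=1$ (the case $n_3=1$ is identical after exchanging $n_1$ and $n_3$, since the bound depends on them only through $n_1n_3$ and $n_1+n_3$). Then $n_1n_3=n_3$, $n_1+n_3=n_3+1$, and the denominator $n_1n_3+1=n_3+1$ equals the numerator factor, so the fraction reduces exactly to $n_2$. Hence (\ref{general3}) becomes
\[
\dim A\ge 1+n_3+n_2=n_1+n_2+n_3=n,
\]
which is the claim.

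In general $n_1n_3+1\ge n_1+n_3$ (equivalently $(n_1-1)(n_3-1)\ge 0$), with equality exactly when $n_1=1$ or $n_3=1$. So the proof also explains the hypothesis: only in this equality case does the bound reach $n_2$ in the middle term. In all other cases the bound is strictly weaker, and this is where Courter-like behaviour may enter.

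There is no real obstacle here. The only points to check are the symmetry of (\ref{general3}) in $n_1,n_3$ and that the ceiling of the integer $n_2$ is just $n_2$.
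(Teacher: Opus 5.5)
Your proof is correct and is the same as the paper's argument. You substitute $n_1=1$ into \eqref{general3}, note that the fraction collapses to the integer $n_2$ to get $\dim A\ge 1+n_3+n_2=n$, and treat $n_3=1$ by the symmetry in $n_1,n_3$. Your second paragraph is not needed for the proof and is slightly inaccurate: because of the ceiling, the middle term can still equal $n_2$ when $n_1,n_3\ge 2$, and the larger term $1+n_1n_3$ can push the whole bound above $n$. For example, signature $(2,1,2)$ gives $6\ge 5$.
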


\begin{proof}
	Substituting \(n_1=1\) into (\ref{general3}) gives
	\[	\dim A	\ge	1+n_3+	\left\lceil	\frac{n_2(1+n_3)}{n_3+1}	\right\rceil
	\]
and we obtain
	\[	\dim A	\ge	1+n_3+n_2	=	n.\]
	
	The case \(n_3=1\) is analogous. 
\end{proof}\medskip

We see that a local Courter-like algebra with radical nilpotent of degree 3 can occur only when \[	n_1,n_3\ge2.\]

\begin{theorem}[Local lower bound for nilpotency degree \(3\)]
	\label{local_nil3_bound}
	Let	$A\subseteq M_n(K)$
	be a local maximal commutative algebra with radical nilpotent of degree \(3\).	Then
	\[
	\dim A\ge
	\begin{cases}
		n, & n<14,\\[2mm]	\displaystyle	\min_{\substack{n_1+n_2+n_3=n\\ n_1,n_3\ge2}}
		\left\{	1+n_1n_3+	\left\lceil	\dfrac{n_2(n_1+n_3)}{n_1n_3+1}	\right\rceil
		\right\}, & n\ge14.
	\end{cases}
	\]
\end{theorem}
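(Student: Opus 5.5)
The plan is to combine the general local bound (\ref{general3}) with Corollary~\ref{n1_or_n3_one}, and then do a small arithmetic analysis over Loewy signatures. Throughout, write $(n_1,n_2,n_3)=(a,c,b)$. Since the radical has nilpotency degree exactly $3$, every layer is nonzero, so $a,b,c\ge1$ and $n=a+b+c$.

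\textbf{Reduction and the case $n\ge14$.} By Corollary~\ref{n1_or_n3_one}, any signature with $a=1$ or $b=1$ already gives $\dim A\ge n$. For $n\ge 14$ I only need to check that $n$ is at least the stated minimum, so that these signatures do not lower it. Take the admissible signature $(2,n-4,2)$. The right-hand side of (\ref{general3}) is then $5+\lceil 4(n-4)/5\rceil$. This is at most $n$ as soon as $4(n-4)\le 5(n-5)$, i.e. $n\ge 9$. For signatures with $a,b\ge2$, (\ref{general3}) gives directly $\dim A\ge$ the minimum. Hence the case $n\ge14$ follows.

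\textbf{The case $n<14$.} It remains to show that for $a,b\ge2$ and $a+b+c\le 13$,
\[
1+ab+\left\lceil \frac{c(a+b)}{ab+1}\right\rceil \ge a+b+c .
\]
Using $ab+1-(a+b)=(a-1)(b-1)$, this is equivalent to
\[
\left\lceil \frac{c(a+b)}{ab+1}\right\rceil \ge c-(a-1)(b-1).
\]
The key identity is
\[
c-\frac{c(a+b)}{ab+1}=\frac{c(a-1)(b-1)}{ab+1}.
\]
So the inequality holds, even without the ceiling, whenever $c\le ab+1$.

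If $c>ab+1$, then $n>(a+1)(b+1)$. Together with $n\le13$ this leaves only a short finite list:
\begin{itemize}
\item $(a,b)=(2,2)$ with $6\le c\le 9$;
\item $(a,b)=(2,3)$ or $(3,2)$ with $c=8$.
\end{itemize}
For $(3,3)$ we would already need $c\ge11$, which is impossible. I would check the remaining cases by hand:
\begin{itemize}
\item For $(2,2)$ the condition is $\lceil 4c/5\rceil\ge c-1$. This gives $5\ge5$, $6\ge6$, $7\ge7$, $8\ge8$ for $c=6,\dots,9$.
\item For $(2,3)$ with $c=8$ the condition is $\lceil 40/7\rceil=6\ge 6$.
\end{itemize}
So in all cases $\dim A\ge n$.

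\textbf{Expected obstacle.} The delicate point is this boundary enumeration. The ceiling is genuinely needed there, and the margin is exactly zero in every case. The first failure is $(2,10,2)$ at $n=14$, where $\lceil 32/5\rceil=7<9$. This is what makes $14$ the threshold and matches Courter's example. I would present the identity above as the main step and the finite table as a verification.
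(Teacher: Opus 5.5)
Your proof is correct and follows the paper's route: Corollary~\ref{n1_or_n3_one} together with \eqref{general3}, then a finite check for $n<14$. You make the paper's unexplained ``finite check'' explicit through the identity $ab+1-(a+b)=(a-1)(b-1)$, and you supply a point the paper leaves implicit, namely that the admissible signature $(2,n-4,2)$ keeps the minimum at most $n$ for $n\ge14$. One small slip in your closing remark: for $(2,10,2)$ the relevant term is $\lceil 40/5\rceil=8<9$, not $\lceil 32/5\rceil=7$.
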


\begin{proof}
	By Corollary~\ref{n1_or_n3_one}, if \(n_1=1\) or \(n_3=1\), then
	\(\dim A\ge n\). Hence a Courter-like case can occur only when
	\(n_1,n_3\ge 2\).
	
	For fixed \(n\), the preceding bound \eqref{general3} gives
	\[
	\dim A\ge
	\min_{\substack{n_1+n_2+n_3=n\\ n_1,n_3\ge2}}
	\left\{
	1+n_1n_3+
	\left\lceil
	\dfrac{n_2(n_1+n_3)}{n_1n_3+1}
	\right\rceil
	\right\}.
	\]
	
	For $n\leq 13$ finite check gives $\dim A\geq n$.
\end{proof}

\begin{corollary}[The first Courter-like case]
	\label{first_case}
	For small degrees of matrices
	\[	n=6,7,\ldots,13	\]
	the bound gives
	\[	\dim A	\ge	n.	\]	
	Hence no local Courter-like algebra with radical nilpotent of degree \(3\)
	exists in these dimensions.	
	The first value for which the bound allows
	\[	\dim A<n\]	is	for $n=14$ and Loewy signature $(2,10,2)$.
	Dimension 13 is attained by Courter's classical algebra
	\[	\mathcal C\subseteq M_{14}(K).	\]
\end{corollary}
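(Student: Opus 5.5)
The plan is to turn the ``finite check'' in the proof of Theorem~\ref{local_nil3_bound} into a closed-form criterion. Write $f(n_1,n_2,n_3)=1+n_1n_3+\left\lceil \frac{n_2(n_1+n_3)}{n_1n_3+1}\right\rceil$ for the bound \eqref{general3}. By Corollary~\ref{n1_or_n3_one}, only the case $n_1,n_3\ge 2$ needs attention.

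Put $p=n_1n_3$ and $q=(n_1-1)(n_3-1)=p+1-(n_1+n_3)\ge 1$. Since $n_1+n_3=p+1-q$, we have
\[
\left\lceil \frac{n_2(p+1-q)}{p+1}\right\rceil = n_2-\left\lfloor \frac{n_2q}{p+1}\right\rfloor .
\]
Substituting this into $f$ and subtracting $n=n_1+n_2+n_3$ gives the identity
\[
f(n_1,n_2,n_3)-n \;=\; q-\left\lfloor \frac{n_2\,q}{p+1}\right\rfloor .
\]
Hence the bound permits $\dim A<n$ exactly when $\lfloor n_2q/(p+1)\rfloor\ge q+1$. This is equivalent to
\[
n_2\ \ge\ \frac{(p+1)(q+1)}{q}=(p+1)+\frac{p+1}{q}.
\]
Consequently, a Courter-like signature forces
\[
n \;\ge\; n_1+n_3+(p+1)+\left\lceil\frac{p+1}{q}\right\rceil .
\]

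Next I minimise the right-hand side over $n_1,n_3\ge2$.
\begin{itemize}
\item For $(2,2)$ we have $p=4$, $q=1$, so $n_2\ge 10$ and $n\ge 4+5+5=14$.
\item For $(2,3)$ we have $p=6$, $q=2$, so $n_2\ge 11$ and $n\ge 16$.
\item As $n_1,n_3$ grow, $p$ grows quadratically while $(p+1)/q$ stays bounded below by $1$. A short monotonicity remark then shows every other pair gives a threshold above $14$; for instance $(3,3)$ needs $n_2\ge 13$, so $n\ge 19$.
\end{itemize}
Therefore for $6\le n\le 13$ every admissible signature satisfies $f\ge n$, and so $\dim A\ge n$. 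The only signature with $n=14$ allowing $\dim A<n$ is $(2,10,2)$. There $f=1+4+\lceil 40/5\rceil=13$. As a sanity check, the neighbours $(2,9,3)$ and $(3,8,3)$ give $14$ and $15$ respectively.

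Finally, I need attainment: Courter's algebra $\mathcal C\subseteq M_{14}(K)$ must be local with radical cube zero, Loewy signature $(2,10,2)$, and dimension $13$. I would verify this directly from Courter's explicit generators in \cite{Courter1961}. The identity is $1$ dimension, the socle block $M_{2\times 2}(K)$ contributes $4$, and there are $d=8$ radical generators living in the blocks $\Lambda_{21}$ and $\Lambda_{32}$. Since $\dim A=13\ge f=13$, the centralizer estimate of Proposition~\ref{centralizer_estimate_nil3} is sharp there.

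I expect this last step to be the main obstacle. It requires presenting Courter's example in Loewy form, checking that $J^2\neq 0$ but $J^3=0$, and confirming that $\dim JK^{14}=12$ and $\dim J^2K^{14}=2$. Maximality itself is Courter's theorem and can be cited. The arithmetic steps above are routine once the identity for $f-n$ is in hand.
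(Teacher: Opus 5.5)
Your proposal is correct, and for the inequality part it takes a genuinely different route from the paper. The paper gives no computation here. The corollary rests on the phrase ``finite check'' in the proof of Theorem~\ref{local_nil3_bound}, i.e.\ an unexhibited enumeration of the bound \eqref{general3} over signatures with $n_1,n_3\ge 2$. It then simply asserts that Courter's algebra has signature $(2,10,2)$ and dimension $13$. Your identity
\[
f(n_1,n_2,n_3)-n=q-\left\lfloor \frac{n_2q}{p+1}\right\rfloor,\qquad p=n_1n_3,\quad q=(n_1-1)(n_3-1),
\]
replaces that enumeration by a closed-form criterion valid for every $n$, and it buys several things:
\begin{itemize}
\item It contains Corollary~\ref{n1_or_n3_one} as the case $q=0$, where in fact $f=n$ exactly.
\item For each pair $(n_1,n_3)$ it gives the exact threshold $n\ge n_1+n_3+n_1n_3+1+\lceil (n_1n_3+1)/q\rceil$ at which the bound starts to admit $\dim A<n$.
\item It explains why $(2,2)$ is the optimal outer pair and why $(2,10,2)$ is the unique candidate at $n=14$; the paper states this uniqueness without argument.
\end{itemize}
Your monotonicity step can be made fully explicit. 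Since $p+1-q=n_1+n_3>0$, one has $\lceil (p+1)/q\rceil\ge 2$. So the threshold is at least $(n_1+1)(n_3+1)+2$, which is $\ge 17$ for every pair except $(2,2)$, $(2,3)$, $(3,2)$, and those you compute directly.

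For attainment you and the paper stand on the same footing, since both depend on Courter's explicit generators and his proof of maximality. Checking $J^3=0$, $\dim JK^{14}=12$ and $\dim J^2K^{14}=2$ directly, as you plan, is more than the paper does. If you only need that the value $13$ at signature $(2,10,2)$ is realised by some local maximal commutative algebra, the paper's own stack algebra $\mathcal E\star\mathcal E$ gives such an example, independently of how Courter's algebra is presented.
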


\subsection{Maximal commutative algebras with radical nilpotent of degree \(4\)}

\begin{proposition}[Centralizer estimate for nilpotency degree \(4\)]
	\label{centralizer_estimate_nil4}
	Let \(A\subseteq M_n(K)\) be a local algebra with radical nilpotent of
	degree \(4\), Loewy signature \((n_1,n_2,n_3,n_4)\), and decomposition
	\[
	A=KI_n+\Lambda+\Sigma,
	\qquad
	\dim\Lambda=d.
	\]
	
	Then
	\[
	\dim C(A)
	\ge
	1+n_1n_4+(n_2+n_3)(n_1+n_4)+n_2n_3-d\,n_1n_4.
	\]
\end{proposition}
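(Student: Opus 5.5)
The plan is to follow the proof of Proposition~\ref{centralizer_estimate_nil3}: exhibit a large space $\mathcal V$ of candidate matrices, and cut out $C(A)\cap\mathcal V$ by $d$ linear conditions, one for each generator of $\Lambda$. The number $1+n_1n_4+(n_2+n_3)(n_1+n_4)+n_2n_3$ is exactly $1$ plus the number of entries in the six strictly lower blocks $(2,1),(3,1),(4,1),(3,2),(4,2),(4,3)$ of the Loewy form. So I take
\[
\mathcal V=\{\alpha I+X : X \text{ strictly lower block triangular with blocks } n_1,\dots,n_4\}.
\]
As in the degree~$3$ case, every element of $\mathcal V$ commutes with $KI$. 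It also commutes with $\Sigma=M_{n_4\times n_1}(K)$, because a product of a strictly lower matrix with a matrix supported in the socle block $(4,1)$ must raise the Loewy level by at least $4$, and so vanishes. Hence membership of $X\in\mathcal V$ in $C(A)$ is equivalent to $[X,\lambda_i]=0$ for a basis $\lambda_1,\dots,\lambda_d$ of $\Lambda$. For each $i$ I define $T_i(X)=[X,\lambda_i]$. The kernel estimate then gives
\[
\dim C(A)\ge \dim(C(A)\cap\mathcal V)\ge \dim\mathcal V-\sum_i \operatorname{rank}T_i .
\]

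The key step is to bound $\operatorname{rank}T_i$. I write $\lambda_i$ and $X$ in blocks $(\lambda_i)_{pq}$ and $X_{pq}$ with $p>q$, and expand the commutator by block multiplication. The product of two strictly lower matrices lives in blocks $(p,q)$ with $p-q\ge2$, that is, in $(3,1)$, $(4,2)$ and $(4,1)$. For example,
\[
[X,\lambda_i]_{31}=X_{32}(\lambda_i)_{21}-(\lambda_i)_{32}X_{21},\qquad
[X,\lambda_i]_{42}=X_{43}(\lambda_i)_{32}-(\lambda_i)_{43}X_{32}.
\]
The $(4,1)$ block also collects the terms through the intermediate layers. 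If I could show that only the $(4,1)$ component survives, then $T_i$ maps into $M_{n_4\times n_1}(K)$, so $\operatorname{rank}T_i\le n_1n_4$, and the stated bound follows verbatim.

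This is where I expect the main obstacle. Unlike degree $3$, the $(3,1)$ and $(4,2)$ components of $[X,\lambda_i]$ need not vanish for arbitrary strictly lower $X$. A direct count therefore only yields
\[
\operatorname{rank}T_i\le n_1n_3+n_2n_4+n_1n_4 .
\]
What I would try first is to show that these components can be absorbed, by one of two routes.

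\begin{itemize}
\item Use commutativity of $A$ itself, together with a choice of basis of $\Lambda$ adapted to the Loewy grading. The aim is to show that the level-$2$ components are forced to vanish on a subspace of $\mathcal V$ of the full stated dimension, or that they impose conditions already counted among the socle conditions.
\item Shrink $\mathcal V$ to the matrices whose level-$2$ commutator components vanish against all of $\Lambda$ simultaneously, and compare the resulting count with the claimed one.
\end{itemize}

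If neither route works, the honest fallback is the weaker estimate
\[
\dim C(A)\ge 1+n_1n_4+(n_2+n_3)(n_1+n_4)+n_2n_3-d\,(n_1n_4+n_1n_3+n_2n_4).
\]
This still suffices for the subsequent dimension bounds, though in a weaker form. I would therefore check carefully, on Courter-type examples, whether the sharper rank bound $n_1n_4$ really holds before committing to it.
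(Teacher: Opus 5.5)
You have found exactly the step that fails, but it cannot be repaired. No absorption argument can work, because the proposition is false as stated.

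\textbf{Counterexample.} Take $A=K[N]\subseteq M_4(K)$, where $N$ is the nilpotent Jordan block $Ne_j=e_{j+1}$.
\begin{itemize}
\item $A$ is local and maximal commutative: $N$ is nonderogatory, so $C(A)=C(N)=A$.
\item Its Loewy signature is $(1,1,1,1)$, with $\Sigma=KN^3$, $\Lambda=\langle N,N^2\rangle$ and $d=2$.
\end{itemize}
The claimed inequality then reads
\[
4=\dim C(A)\ge 1+1+(1+1)(1+1)+1-2\cdot 1=5,
\]
which is false. Concretely, write $X\in\mathcal V$ with entries $x_{pq}$ for $p>q$. The map $T_1=[\,\cdot\,,N]$ imposes three independent conditions:
\begin{itemize}
\item $x_{32}=x_{21}$ in block $(3,1)$;
\item $x_{43}=x_{32}$ in block $(4,2)$;
\item $x_{42}=x_{31}$ in block $(4,1)$.
\end{itemize}
So $\operatorname{rank}T_1=3>n_1n_4=1$, and the level-$2$ components are genuinely new conditions. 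Your first route has nothing to exploit: $A$ is commutative, and $\Lambda$ is spanned by elements homogeneous for the Loewy grading. Your second route can only shrink $\mathcal V$ below the stated count; in this example it leaves $\dim\mathcal V'=5$ and a bound of $3$.

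\textbf{Comparison with the paper.} The paper's own proof is the verbatim transfer from degree $3$ that you declined to make. It asserts that, as before, the commutator is supported only in the socle block. That holds in degree $3$, where a product of two strictly lower block triangular matrices can only live in block $(3,1)$. It fails in degree $4$ for precisely the reason you gave. So your diagnosis is right, and what this method actually proves is your fallback
\[
\dim C(A)\ge 1+n_1n_4+(n_2+n_3)(n_1+n_4)+n_2n_3-d\,(n_1n_4+n_1n_3+n_2n_4),
\]
which is correct.

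Do not assume, however, that it ``still suffices'' downstream.
\begin{itemize}
\item The same $K[N]$ contradicts the maximal-commutative corollary derived from this proposition: that corollary would force $\dim A\ge 5$ in $M_4(K)$.
\item Likewise, Jordan blocks of any size $r\ge 4$ violate the general degree-$r$ bound.
\item The weaker estimate cannot recover, for instance, $\dim A\ge n$ when $n_1=1$. For signature $(1,2,2,1)$ it yields only $\dim A\ge 4<6$.
\end{itemize}
That last conclusion is nonetheless true for an independent reason. If $n_1=1$, Nakayama's lemma makes $K^n=Av$ cyclic. Faithfulness plus commutativity then give $K^n\cong A$, so $\dim A=n$.
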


\begin{proof}
	Consider the ambient lower triangular space
	\[
	\mathcal V=
	\left\{
	\alpha I_n+
	\begin{pmatrix}
		0&0&0&0\\
		*&0&0&0\\
		*&*&0&0\\
		*&*&*&0
	\end{pmatrix}
	\right\},
	\]
	adapted to the Loewy decomposition. Its dimension is
	\[
	1+n_1n_4+(n_2+n_3)(n_1+n_4)+n_2n_3.
	\]
	
	By the same argument as in Proposition~\ref{centralizer_estimate_nil3},
	every element of \(\mathcal V\) commutes with \(KI_n\) and with the socle
	block
	\[
	\Sigma=M_{n_4\times n_1}(K).
	\]
	Thus, to lie in \(C(A)\), it is enough to commute with the generators
	\[
	\lambda_1,\dots,\lambda_d
	\]
	of \(\Lambda\).	
	For each generator, the commutator defines a linear map
	\[
	T_i:\mathcal V\to M_{n_4\times n_1}(K),
	\qquad
	T_i(X)=[X,\lambda_i].
	\]
	As before, the commutator is supported only in the socle block, so
	\[
	\operatorname{rank}T_i\le n_1n_4.
	\]
	Hence
	\[
	\dim C(A)\ge \dim\mathcal V-d\,n_1n_4.
	\]
\end{proof}

\begin{corollary}[Maximal commutative case]
	\label{general4}
	Let \(A\subseteq M_n(K)\) be a local maximal commutative algebra with
	radical nilpotent of degree \(4\) and Loewy signature
	\((n_1,n_2,n_3,n_4)\).
	
	Then
	\[
	\dim A
	\ge
	1+n_1n_4+
	\left\lceil
	\frac{(n_2+n_3)(n_1+n_4)+n_2n_3}{n_1n_4+1}
	\right\rceil.
	\]
\end{corollary}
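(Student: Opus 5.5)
The plan is to copy the proof of the general local bound (\ref{general3}) for nilpotency degree \(3\), now using Proposition~\ref{centralizer_estimate_nil4}. First I record the dimension formula. By the Loewy form structure proposition, \(A=KI_n+\Lambda+\Sigma\) with \(\Sigma=M_{n_4\times n_1}(K)\) and \(\dim\Lambda=d\), hence
\[
\dim A=1+d+n_1n_4 .
\]

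Next I use maximality, \(C(A)=A\), and compare this formula with the lower bound of Proposition~\ref{centralizer_estimate_nil4}. This gives
\[
1+d+n_1n_4\ \ge\ 1+n_1n_4+(n_2+n_3)(n_1+n_4)+n_2n_3-d\,n_1n_4 .
\]
Cancelling \(1+n_1n_4\) and collecting the terms in \(d\) yields
\[
d\,(n_1n_4+1)\ \ge\ (n_2+n_3)(n_1+n_4)+n_2n_3 .
\]
Since \(d\) is an integer, it follows that
\[
d\ \ge\ \left\lceil \frac{(n_2+n_3)(n_1+n_4)+n_2n_3}{n_1n_4+1}\right\rceil .
\]
Substituting this into \(\dim A=1+d+n_1n_4\) gives the claimed inequality.

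The algebra here is routine. The one step that needs real justification is the input from Proposition~\ref{centralizer_estimate_nil4}: that for \(X\in\mathcal V\) and a radical generator \(\lambda_i\), the commutator \([X,\lambda_i]\) is supported only in the socle block, so that \(\operatorname{rank}T_i\le n_1n_4\). Unlike degree \(3\), the commutator of two strictly lower block triangular \(4\times 4\) block matrices can be nonzero in blocks \((3,1)\) and \((4,2)\), not just \((4,1)\). If that support claim is not valid as stated, the inequality for \(d\) has to be rebuilt. One would bound \(\operatorname{rank}T_i\) by the size of the full target space \(n_1n_3+n_2n_4+n_1n_4\), or reduce \(\mathcal V\) so that the commutators land in \(\Sigma\). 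Since we may assume the results stated earlier, I would take Proposition~\ref{centralizer_estimate_nil4} as given, and the corollary then follows exactly as (\ref{general3}) did.
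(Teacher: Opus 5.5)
Your derivation is the same as the paper's proof: combine $\dim A=1+d+n_1n_4$ with Proposition~\ref{centralizer_estimate_nil4} and solve for $d$. The doubt you raise at the end, however, is not hypothetical. Taking that proposition as given does not save the argument, because the proposition is false, and so is the corollary itself.

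Take $A=K[N]\subseteq M_4(K)$, where $N$ is the nilpotent Jordan block $Ne_i=e_{i+1}$, $Ne_4=0$. Since $N$ is nonderogatory, $C(A)=K[N]=A$. So $A$ is local maximal commutative, its radical is nilpotent of degree $4$, and its Loewy signature is $(1,1,1,1)$. Here $\Sigma=KN^3$ and $\Lambda=\langle N,N^2\rangle$, so $d=2$ and $\dim A=4$. The claimed bound, however, is $1+1+\lceil 5/2\rceil=5$.

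The step that fails is exactly the one you flagged. On the $7$-dimensional space $\mathcal V$, the map $T_1=[\,\cdot\,,N]$ has kernel $C(N)\cap\mathcal V=K[N]$, hence rank $3$ rather than at most $n_1n_4=1$. The reason is that for four Loewy layers, products of strictly lower block triangular matrices also have nonzero $(3,1)$ and $(4,2)$ blocks. For three layers only the $(3,1)$ block survives, which is why the analogy with the degree-$3$ argument breaks.

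Neither of your proposed repairs recovers the stated inequality.
\begin{itemize}
\item Bounding $\operatorname{rank}T_i\le n_1n_3+n_2n_4+n_1n_4$ is legitimate, since commutation of $X\in\mathcal V$ with $KI$ and $\Sigma$ is still automatic. But it only yields $d\,(1+n_1n_3+n_2n_4+n_1n_4)\ge (n_2+n_3)(n_1+n_4)+n_2n_3$.
\item Shrinking $\mathcal V$ to $KI$ plus the $(3,1)$, $(4,2)$, $(4,1)$ blocks does force the commutators into $\Sigma$. But it only yields $d\,(n_1n_4+1)\ge n_1n_3+n_2n_4$.
\end{itemize}
Both weaker bounds are consistent with the example above, giving $d\ge 2$ and $d\ge 1$ respectively.

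So the corollary as written cannot be proved by any route. The paper's proof, being identical to yours, inherits the same defect from Proposition~\ref{centralizer_estimate_nil4}. The statement has to be replaced by a corrected bound, such as the first one above.
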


\begin{proof}
	Write \(A=KI_n+\Lambda+\Sigma\), with \(\dim\Lambda=d\).
	
	Since \(C(A)=A\), Proposition~\ref{centralizer_estimate_nil4} and (\ref{dim}) give
	\[
	1+d+n_1n_4
	\ge
	1+n_1n_4+(n_2+n_3)(n_1+n_4)+n_2n_3-d\,n_1n_4.
	\]
	
	Hence
	\[
	d
	\ge
	\left\lceil
	\frac{(n_2+n_3)(n_1+n_4)+n_2n_3}{n_1n_4+1}
	\right\rceil,
	\]
	which yields the claim.
\end{proof}

\begin{corollary}
	Let \(A\subseteq M_n(K)\) be a local maximal commutative algebra with
	radical nilpotent of degree \(4\) and Loewy signature
	\((n_1,n_2,n_3,n_4)\).
	
	If \(n_1=1\) or \(n_4=1\), then
	\[
	\dim A\ge n.
	\]
\end{corollary}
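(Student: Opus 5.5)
Assume $n_1=1$; the case $n_4=1$ is symmetric, since exchanging $n_1$ and $n_4$ leaves the bound unchanged. The plan is to substitute $n_1=1$ into the bound of Corollary~\ref{general4} and show the right-hand side is at least $n=1+n_2+n_3+n_4$.

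With $n_1=1$, the denominator becomes $n_1n_4+1=n_4+1$. The numerator becomes
\[
(n_2+n_3)(1+n_4)+n_2n_3.
\]
Hence
\[
\frac{(n_2+n_3)(1+n_4)+n_2n_3}{n_4+1}
= n_2+n_3+\frac{n_2n_3}{n_4+1}.
\]
Since $n_2+n_3$ is an integer, the ceiling equals
\[
n_2+n_3+\left\lceil \frac{n_2n_3}{n_4+1}\right\rceil.
\]
The extra term is nonnegative, and in fact at least $1$ because $n_2,n_3\ge1$ (all Loewy layers are nonzero when the nilpotency degree is exactly $4$).

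Substituting, Corollary~\ref{general4} gives
\[
\dim A\ \ge\ 1+n_4+n_2+n_3+\left\lceil \frac{n_2n_3}{n_4+1}\right\rceil\ \ge\ 1+n_2+n_3+n_4=n,
\]
as required. We even get $\dim A\ge n+1$ here, though only $\dim A\ge n$ is claimed.

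For $n_4=1$ the computation is identical, since both numerator and denominator are symmetric under $n_1\leftrightarrow n_4$. There is no real obstacle. The only points to check are that the numerator splits exactly as $(n_2+n_3)(n_1n_4+1)$ plus a nonnegative remainder when one of $n_1,n_4$ equals $1$, and that pulling the integer part $n_2+n_3$ out of the ceiling is legitimate. Both follow directly from the displayed identity, in the same way as Corollary~\ref{n1_or_n3_one} was derived from (\ref{general3}).
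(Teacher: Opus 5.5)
Your computation is exactly the paper's proof: substitute $n_1=1$ into Corollary~\ref{general4}. The problem is that this corollary is false, and your own side remark exposes it. You derive $\dim A\ge n+\lceil n_2n_3/(n_4+1)\rceil\ge n+1$ whenever $n_1=1$. Now let $N\in M_4(K)$ be the nilpotent Jordan block and $A=K[N]$. Since $N$ is nonderogatory, $C(A)=C(N)=K[N]$, so $A$ is local maximal commutative. Its radical has nilpotency degree $4$, its Loewy signature is $(1,1,1,1)$, and $\dim A=4=n$. Corollary~\ref{general4}, however, predicts $\dim A\ge 2+\lceil 5/2\rceil=5$. In fact every algebra with $n_1=1$ has $\dim A=n$ exactly (see below), so the inequality $\dim A\ge n+1$ fails for all of them. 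The error sits in Proposition~\ref{centralizer_estimate_nil4}. With four Loewy layers, the commutator of two strictly lower block triangular matrices is supported in the blocks $(3,1)$, $(4,1)$ and $(4,2)$, not only in the socle block. Hence $\operatorname{rank}T_i\le n_1n_4$ is unjustified. For example, $[E_{21},N]=-E_{31}$ for the matrix unit $E_{21}$, and for $K[N]$ the proposition would give $\dim C(A)\ge 5$, whereas $\dim C(A)=4$.

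For $n_1=1$ the statement is nevertheless true, by a direct argument that bypasses the centralizer count. Choose $v\notin JK^n$. Then $K^n=Kv+JK^n\subseteq Av+JK^n$, and feeding this back in gives $K^n=Av+J^kK^n$ for every $k$. Since $J^4K^n=0$, this yields $K^n=Av$. So $a\mapsto av$ maps $A$ onto $K^n$, and $\dim A\ge n$. The map is also injective: if $av=0$, then $a(bv)=b(av)=0$ for all $b\in A$, so $a=0$; hence $\dim A=n$.

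The case $n_4=1$ is not covered by your argument. The symmetry $n_1\leftrightarrow n_4$ you invoke is only a symmetry of the incorrect formula. Passing to $A^{T}$ (again local maximal commutative) does not rescue it. The radical filtration of $A^{T}$ is $(\ker J^i)^{\perp}$, governed by the socle filtration of $A$ rather than by $J^iK^n$. So $\dim J^3K^n=1$ does not make the top layer of $A^{T}$ one-dimensional; that would require $\dim\ker J=1$. This case needs a genuinely separate argument.
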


\begin{proof}
	Substituting \(n_1=1\) into (\ref{general4}) gives
	\[
	\dim A
	\ge
	1+n_4+
	\left\lceil
	\frac{(n_2+n_3)(1+n_4)+n_2n_3}{n_4+1}
	\right\rceil
	\ge
	1+n_4+n_2+n_3=n.
	\]
	
	The case \(n_4=1\) is analogous.
\end{proof}

Thus local Courter-like algebras of nilpotency degree \(4\) may occur only
when
\[
n_1,n_4\ge2.
\]

\begin{theorem}[Local lower bound for nilpotency degree \(4\)]
	\label{local_nil4_bound}
	Let \(A\subseteq M_n(K)\) be a local maximal commutative algebra with
	radical nilpotent of degree \(4\). Then
	\[
	\dim A\ge
	\begin{cases}
		n, & n<23,\\[2mm]
		\displaystyle
		\min_{\substack{n_1+n_2+n_3+n_4=n\\ n_1,n_4\ge2}}
		\left\{
		1+n_1n_4+
		\left\lceil
		\dfrac{(n_2+n_3)(n_1+n_4)+n_2n_3}{n_1n_4+1}
		\right\rceil
		\right\},
		& n\ge23.
	\end{cases}
	\]
\end{theorem}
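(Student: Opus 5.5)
The plan is to follow the proof of Theorem~\ref{local_nil3_bound}, with Corollary~\ref{general4} playing the role of \eqref{general3}. First, by the corollary just proved, if $n_1=1$ or $n_4=1$ then $\dim A\ge n$. So only signatures with $n_1,n_4\ge 2$ can give a Courter-like algebra, and $n_2,n_3\ge 1$ holds since the radical has degree exactly $4$. For such signatures Corollary~\ref{general4} bounds $\dim A$ below by the displayed expression, and taking the minimum over all admissible $(n_1,n_2,n_3,n_4)$ with sum $n$ gives the second case of the theorem for every $n$. The content to prove is therefore the first case: for $n<23$ this minimum is always at least $n$.

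To make that check transparent rather than a blind enumeration, I will write
\[
p=n_1n_4,\qquad s=n_1+n_4,\qquad m=n_2+n_3,\qquad q=n_2n_3,
\]
so that $n=s+m$ and the bound reads $1+p+\lceil (ms+q)/(p+1)\rceil$. It suffices to show that
\[
ms+q\ >\ (p+1)(s+m-2-p)
\]
whenever the bound would otherwise drop below $n$.

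Two reductions shrink the search.
\begin{itemize}
\item If $p+1\ge s+m$, the bound is $\ge n$ trivially, since $1+p\ge n$. This removes all signatures with large outer blocks.
\item For fixed $n_1,n_4$ and fixed $m$, the quantity $q=n_2n_3$ is minimized at $\{n_2,n_3\}=\{1,m-1\}$, giving $q\ge m-1$. It is therefore enough to verify
\[
m(s+1)-1\ \ge\ (p+1)(s+m-2-p),
\]
which is linear in $m$ once $n_1,n_4$ are fixed.
\end{itemize}
Solving this for $m$ yields, for each pair $(n_1,n_4)$ with $n_1,n_4\ge2$, an explicit threshold on $n=s+m$ below which $\dim A\ge n$ is forced.

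The remaining step is to take the smallest of these thresholds over all pairs $(n_1,n_4)$ and check that it is $n=23$. Only finitely many pairs matter, because $p$ grows quadratically while the inequality only needs $p$ to be at least roughly $s+m$. In the borderline cases the rounding $\lceil\cdot\rceil$ and the choice $q=m-1$ versus larger $q$ can matter, so I will tabulate those cases exactly. This check is routine but error-prone, and it is where I expect the main obstacle: the sufficient inequality above may be lossy near the threshold, so the final confirmation that nothing below $23$ violates $\dim A\ge n$ may require a direct finite computation of the exact bound over all signatures with $n\le 22$. I would present that computation as a short table or computer check, in the same spirit as the ``finite check'' in the proof of Theorem~\ref{local_nil3_bound}.
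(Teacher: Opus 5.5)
Your plan is the paper's own proof of Theorem~\ref{local_nil4_bound}: invoke Corollary~\ref{general4} for signatures with $n_1,n_4\ge 2$, handle $n_1=1$ or $n_4=1$ by the preceding corollary, and finish with a finite check for $n<23$. The genuine gap, which the paper shares, is that Corollary~\ref{general4} is false, and so is Proposition~\ref{centralizer_estimate_nil4} behind it. Your finite check therefore tests a formula that is not a lower bound for $\dim A$.

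The proposition assumes that $[X,\lambda_i]$ lives only in the socle block. That holds for three Loewy layers but not for four. For strictly lower block triangular $X$ and $\lambda_i=(L_{jk})$, the commutator also has the block $X_{32}L_{21}-L_{32}X_{21}$ in position $(3,1)$ and the block $X_{43}L_{32}-L_{43}X_{32}$ in position $(4,2)$. Hence the maps $T_i$ in that proof need not have rank at most $n_1n_4$.

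Two explicit counterexamples:
\begin{itemize}
\item $K[x]/(x^4)$ acting on itself is local maximal commutative, with signature $(1,1,1,1)$ and dimension $4$. The corollary asserts $\dim A\ge 1+1+\lceil 5/2\rceil=5$.
\item The failure persists in the range $n_1,n_4\ge2$ over which the theorem minimizes. Take $A=K[x]/(x^2)\otimes K[y]/(y^2)\otimes\bigl(KI_4+M_{2\times 2}(K)\bigr)\subseteq M_{16}(K)$. It is maximal commutative, since the centralizer of a tensor product is the tensor product of the centralizers. It is local, with radical of degree $4$, Loewy signature $(2,6,6,2)$ and $\dim A=20$. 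The corollary demands $5+\lceil 84/5\rceil=22$.
\end{itemize}

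The obvious repair does not rescue the argument. Shrinking the ambient space $\mathcal V$ of Proposition~\ref{centralizer_estimate_nil4} to scalars plus the blocks $(3,1)$, $(4,1)$, $(4,2)$ makes the commutators with $\Lambda$ genuinely land in the socle. This gives the valid estimate $d(n_1n_4+1)\ge n_1n_3+n_2n_4$. But for signature $(2,2,2,2)$ it yields only $\dim A\ge 7<8$. So no finite check built on it can deliver $\dim A\ge n$ for all $n<23$, and as it stands the statement is not proved by this route.

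Taking Corollary~\ref{general4} at face value, your bookkeeping is sound and does produce $23$. For $p>s$ the bound first drops below $n$ at $m=\lceil((p+1)(p+2-s)-1)/(p-s)\rceil$. This gives:
\begin{itemize}
\item $n=23$ for $(n_1,n_4)=(3,3)$, via signature $(3,1,16,3)$ with bound $22$;
\item $n=24$ for $(2,4)$ and $n=25$ for $(2,3)$;
\item no drop at all for $(2,2)$.
\end{itemize}

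There are two slips, though.
\begin{itemize}
\item The reduced inequality has to be strict, $m(s+1)-1>(p+1)(s+m-2-p)$. With $\ge$ you would accept equality cases such as $(n_1,n_4)=(2,3)$, $m=20$, where the bound is $24<25$. With strictness the reduction is exact rather than lossy, because $n_2n_3=m-1$ is attained at $(n_2,n_3)=(1,m-1)$.
\item The second case does not follow ``for every $n$''. Absorbing signatures with $n_1=1$ or $n_4=1$ requires the minimum to be at most $n$. Your $(3,3)$ computation guarantees this only for $n\ge23$. At $n=6$ the minimum is $7$, while $K[x,y]/(x^3,y^2)$ acting on itself has radical of degree $4$ and dimension $6$.
\end{itemize}
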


\begin{proof}
	For \(n<23\), this follows by finite computation. The general estimate is
	an immediate consequence of Corollary~\ref{general4}.
\end{proof}
\subsection{General local bound and refinement monotonicity}

\begin{theorem}[General local bound]
	\label{local_bound}
	Let \(A\subseteq M_n(K)\) be a local maximal commutative algebra with
	radical nilpotent of degree \(r\ge3\) and Loewy signature
	\((n_1,\dots,n_r)\).
	
	Put
	\[
	S=\sum_{i=2}^{r-1} n_i,	\qquad	P=\sum_{2\le i<j\le r-1} n_i n_j.
	\]
	
	Then
	\[	\dim A	\ge	1+n_1n_r+	\left\lceil	\frac{S(n_1+n_r)+P}{n_1n_r+1}\right\rceil.
	\]
\end{theorem}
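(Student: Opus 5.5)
The plan is to repeat the argument of Propositions~\ref{centralizer_estimate_nil3} and~\ref{centralizer_estimate_nil4} in general, and then conclude as in Corollary~\ref{general4}. Fix a Loewy form of \(A\) with diagonal block sizes \(n_1,\dots,n_r\), and write \(A=KI_n+\Lambda+\Sigma\) as in (\ref{Loewyeq}), with \(\Sigma=M_{n_r\times n_1}(K)\) and \(\dim\Lambda=d\). As ambient space take
\[
\mathcal V=\{\alpha I_n+X:\ \alpha\in K,\ X \text{ strictly lower block triangular}\}.
\]
Its dimension is \(1+\sum_{1\le i<j\le r}n_in_j\). Split this sum according to whether the indices touch the extreme layers \(1\) and \(r\):
\[
\sum_{i<j}n_in_j=n_1n_r+(n_1+n_r)\sum_{i=2}^{r-1}n_i+\sum_{2\le i<j\le r-1}n_in_j=n_1n_r+S(n_1+n_r)+P .
\]
So \(\dim\mathcal V=1+n_1n_r+S(n_1+n_r)+P\), which is exactly the numerator appearing in the statement, shifted by \(1+n_1n_r\).

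The next step is to reduce commutation with \(A\) to commutation with the generators \(\lambda_1,\dots,\lambda_d\) of \(\Lambda\). Every element of \(\mathcal V\) commutes with \(KI_n\). It also commutes with \(\Sigma\), for the same layer reason as in the proof of Proposition~\ref{centralizer_estimate_nil3}: the socle block sends layer \(1\) to layer \(r\), and any strictly lower element kills layer \(r\) and has no component landing in layer \(1\). Hence
\[
C(A)\cap\mathcal V=\bigcap_{i=1}^d\ker T_i,\qquad T_i(X)=[X,\lambda_i].
\]
If each \(T_i\) has rank at most \(n_1n_r\), we get \(\dim C(A)\ge\dim\mathcal V-dn_1n_r\). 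Then \(C(A)=A\) and \(\dim A=1+d+n_1n_r\) give
\[
d(n_1n_r+1)\ge S(n_1+n_r)+P.
\]
Taking the ceiling and substituting back yields the bound, exactly as in the proof of Corollary~\ref{general4}.

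The main obstacle is the rank bound for \(T_i\). For \(r=3\) it is genuinely true, because the commutator of two strictly lower block triangular matrices lies in blocks \((i,j)\) with \(i-j\ge2\), and for \(r=3\) the only such block is the socle. For \(r\ge4\), however, \([X,\lambda_i]\) may also have nonzero entries in the blocks \((3,1),(4,2),\dots\), so the target is larger than \(M_{n_r\times n_1}(K)\). To repair this I would first try to shrink \(\mathcal V\) to a subspace \(\mathcal V'\) on which commutation with \(\Lambda\) holds automatically modulo the socle. A natural candidate is obtained by imposing, in each block of gap at least \(2\) other than the socle, the linear conditions coming from \(\Lambda\) layer by layer. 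This would be carried out by induction on the gap \(i-j\), using that \(\Lambda\) is commutative, so that the non-socle components of the commutators are themselves controlled by the \(d\) generators.

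If this succeeds, one checks that \(\operatorname{codim}\mathcal V'\) is absorbed and the count still produces \(S(n_1+n_r)+P\). If it does not, the honest fallback is to replace \(n_1n_r\) in each rank estimate by \(\sum_{i-j\ge2}n_in_j\). That gives a correct but weaker inequality, and the stated bound would then need an additional argument specific to maximal commutative algebras.
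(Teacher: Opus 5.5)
Your plan is the paper's own proof. The paper uses the same ambient space $\mathcal V$ of dimension $1+n_1n_r+S(n_1+n_r)+P$ and then simply asserts that for each generator ``the commutator defines a linear map into the socle block,'' so each $\lambda_i$ imposes at most $n_1n_r$ conditions. You have located exactly the step that fails: for $r\ge4$ the commutator of two strictly lower block triangular matrices can be nonzero in every block $(i,j)$ with $i-j\ge2$, not only in $(r,1)$. For $r=3$ your argument is complete and coincides with the paper's degree-$3$ estimate (Proposition~\ref{centralizer_estimate_nil3} and \eqref{general3}). For $r\ge4$, however, the gap cannot be closed. Shrinking $\mathcal V$ will not help, and neither will an argument specific to maximal commutative algebras, because the statement itself is false.

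Take $N=E_{21}+E_{32}+E_{43}\in M_4(K)$, where the $E_{ij}$ are matrix units, and let $A=K[N]$. Then $A$ is local and maximal commutative, since $C(N)=K[N]$. Its radical $(N)$ has nilpotency degree $4$ and the Loewy signature is $(1,1,1,1)$. Here $\Sigma=KN^3$, $\Lambda=\langle N,N^2\rangle$, $d=2$ and $\dim A=4$. With $S=2$ and $P=1$, the claimed bound is $1+1+\lceil 5/2\rceil=5>4$. Equivalently, the key inequality $d(n_1n_r+1)\ge S(n_1+n_r)+P$ would read $4\ge5$. In terms of your estimate: $\dim\mathcal V=7$, but $X\mapsto[X,N]$ has rank $3$ on $\mathcal V$, with image spanned by $E_{31},E_{41},E_{42}$. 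Hence $\dim\bigl(C(A)\cap\mathcal V\bigr)=4$ rather than $\ge5$. More generally, an $n\times n$ nilpotent Jordan block gives signature $(1,\dots,1)$ with $r=n$ and $\dim A=n$, while the right-hand side equals $n+\lceil\binom{n-2}{2}/2\rceil>n$ for every $n\ge4$. So for $r\ge4$ your method honestly proves only your fallback inequality $d\bigl(1+\sum_{i-j\ge2}n_in_j\bigr)\ge S(n_1+n_r)+P$. The stated theorem is established only for $r=3$ and fails for every $r\ge4$. The same example also refutes Proposition~\ref{centralizer_estimate_nil4} and Corollary~\ref{general4}, which rely on the identical false rank claim.
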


\begin{proof}
	By the same argument as in Propositions~\ref{centralizer_estimate_nil3}
	and~\ref{centralizer_estimate_nil4}, consider the ambient lower block
	triangular space adapted to the Loewy decomposition, consisting of scalar
	matrices together with arbitrary strictly lower block triangular matrices
	supported outside the diagonal.
	
	Its dimension is
	\[
	1+n_1n_r+S(n_1+n_r)+P,
	\]
	where
	\[
	S=\sum_{i=2}^{r-1} n_i,	\qquad	P=\sum_{2\le i<j\le r-1} n_in_j.
	\]
	
	Every element of this space commutes with \(KI_n\) and with the socle block
	\[
	\Sigma=M_{n_r\times n_1}(K).
	\]
	Thus only commutation with the \(d=\dim\Lambda\) generators of
	\(\Lambda\) must be imposed.
	
	For each generator, the commutator defines a linear map into the socle
	block, hence contributes at most
	\[
	n_1n_r
	\]
	independent linear conditions. Therefore
	\[
	\dim C(A)\ge 1+n_1n_r+S(n_1+n_r)+P-d\,n_1n_r.
	\]
	
	Since \(A\) is maximal commutative,
	\[	C(A)=A,	\qquad	\dim A=1+d+n_1n_r,
	\]
	so
	\[
	1+d+n_1n_r
	\ge
	1+n_1n_r+S(n_1+n_r)+P-d\,n_1n_r.
	\]
	Equivalently,
	\[
	d(n_1n_r+1)\ge S(n_1+n_r)+P,
	\]
	which yields the claim.
\end{proof}

\begin{corollary}
	If \(A\subseteq M_n(K)\) is local maximal commutative with radical
	nilpotent of degree \(r\ge3\) and \(n_1=1\) or \(n_r=1\), then
	\[
	\dim A\ge n.
	\]
\end{corollary}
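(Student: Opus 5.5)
We substitute $n_1=1$ into the bound of Theorem~\ref{local_bound}; the case $n_r=1$ is symmetric, since the bound depends on $n_1$ and $n_r$ only through $n_1n_r$ and $n_1+n_r$. Throughout, write $n=n_1+S+n_r$, where $S=\sum_{i=2}^{r-1}n_i$ as in Theorem~\ref{local_bound}.

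With $n_1=1$ we have $n_1n_r+1=n_r+1$ and $n_1+n_r=1+n_r$. Hence the numerator in Theorem~\ref{local_bound} is $S(n_r+1)+P$, and the bound becomes
\[
\dim A\ge 1+n_r+\left\lceil S+\frac{P}{n_r+1}\right\rceil .
\]
Since $P=\sum_{2\le i<j\le r-1}n_in_j\ge 0$ and $S$ is an integer, the ceiling is at least $S$. Therefore
\[
\dim A\ge 1+n_r+S=n_1+S+n_r=n .
\]

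For $n_r=1$ the same computation, with the roles of $n_1$ and $n_r$ exchanged, gives
\[
\dim A\ge 1+n_1+S=n .
\]

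Nothing here is delicate; the only point to check is that the hypotheses of Theorem~\ref{local_bound} hold. These are exactly locality, maximal commutativity and $r\ge 3$, which is what the statement assumes. We also need $P\ge 0$, which holds trivially. When $r=3$ the sum defining $P$ is empty, so $P=0$, and the argument reduces to Corollary~\ref{n1_or_n3_one}, as it should.
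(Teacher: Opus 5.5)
Your computation is the paper's one-line proof (``Apply Theorem~\ref{local_bound}'') written out, and the arithmetic is correct. The gap is that Theorem~\ref{local_bound} is false for $r\ge4$, so your argument is only valid for $r=3$. There $P=0$, and the argument reduces to Corollary~\ref{n1_or_n3_one}, whose degree-$3$ centralizer estimate is sound.

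Your own intermediate inequality exposes the problem. Suppose $n_1=1$ and pick $v\notin JK^n$. Then $K^n=Av+JK^n$, hence $K^n=Av$ because $J$ is nilpotent. Since $A$ is commutative, $a\mapsto av$ is then a bijection $A\to K^n$, so $\dim A=n$ exactly. But for $r\ge4$ you have $P\ge n_2n_3\ge1$, and your display gives $\dim A\ge n+\lceil P/(n_r+1)\rceil\ge n+1$, a contradiction. Concretely, for $N=E_{21}+E_{32}+E_{43}\in M_4(K)$ the algebra $A=K[N]$ is local maximal commutative with signature $(1,1,1,1)$ and $\dim A=4$. Your bound instead reads $1+1+\lceil 2+\tfrac12\rceil=5$.

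The error is in the proofs of Proposition~\ref{centralizer_estimate_nil4} and Theorem~\ref{local_bound}: they claim $[X,\lambda_i]$ is supported only in the socle block. For $r\ge4$ the commutator lives in every block $(k,l)$ with $k-l\ge2$; for example $[E_{21},N]=-E_{31}$. So each generator can impose many more than $n_1n_r$ conditions.

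What survives is the case $r=3$, and the case $n_1=1$ for every $r$ via the cyclic-vector argument above, which needs neither maximality nor Theorem~\ref{local_bound}. The case $n_r=1$, $r\ge4$, is not covered. Transposing does not reduce it to $n_1=1$: the top layer of $A^T$ on $(K^n)^*$ has dimension $\dim\{v:Jv=0\}$, which can exceed $n_r=\dim J^{r-1}K^n$.

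In fact this branch appears to conflict with the paper's own stack algebras. I sketch the construction; you should re-check it rather than take it on trust.
\begin{itemize}
\item Let $C'$ be the transpose of $\mathcal E^{\star3}$, acting on $N'=K^{19}$. Then $\operatorname{rad}(C')^3=0$, $\dim C'=17$, and the socle $S'=\{v:\operatorname{rad}(C')v=0\}$ is $2$-dimensional.
\item Pick $0\ne n_0\in\operatorname{rad}(C')^2N'$.
\item Let $B=C'\times_K K[x]/(x^4)$ act on $M=(N'\oplus K[x]/(x^4))/K(n_0-x^3)$, so $\dim M=22$ and $B$ acts faithfully with $\dim B=20$.
\end{itemize}
For $s\in S'$, let $\tau_s$ send $\bar{1}$ to $s$ and kill the images of $N'$ and $xK[x]/(x^4)$. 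A direct computation of $\operatorname{End}_B(M)$ gives $C(B)=B+\{\tau_s:s\in S'\}$, with $\tau_{n_0}=x^3\in B$. This algebra is commutative, hence maximal commutative. It is local with $J^3M=K\,\overline{x^3}$, so $r=4$ and $n_4=1$. Its dimension is $20+1=21<22$.
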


\begin{proof}
	Apply Theorem~\ref{local_bound}.
\end{proof}\medskip

\subsection{Signature-free local bounds and comparison with Laffey's estimate}

Since the Loewy signature is generally not known a priori, we now pass to signature-free local bounds obtained by minimizing over all admissible signatures.

\begin{corollary}[Signature-free local bound]
	Let \(A\subseteq M_n(K)\) be a local maximal commutative algebra with
	radical nilpotent of degree \(r\ge3\).
	
	Define
	\begin{equation}\label{D_r}
		D_r(n)
		=
		\min_{\substack{
				n_1+\cdots+n_r=n\\
				n_1,n_r\ge2
		}}
		F_r(n_1,\ldots,n_r),
	\end{equation}
	where \(F_r\) is the lower bound from Theorem~\ref{local_bound}.
	
	Then
	\[
	\dim A\ge D_r(n).
	\]
\end{corollary}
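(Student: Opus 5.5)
The plan is to combine Theorem~\ref{local_bound} with a reduction of the non-admissible signatures. Let $(n_1,\dots,n_r)$ be the Loewy signature of $A$; by the Lemma on uniqueness of the Loewy signature it is well defined. If $n_1,n_r\ge2$, then $(n_1,\dots,n_r)$ is one of the signatures in the minimum \eqref{D_r}. Theorem~\ref{local_bound} then gives
\[
\dim A\ge F_r(n_1,\dots,n_r)\ge D_r(n),
\]
and nothing more is needed. Throughout, I adopt the convention that the minimum over an empty set is $+\infty$. I would restrict to $n\ge r+2$, so that admissible signatures exist; otherwise the statement is vacuous or must be read accordingly. I would state this explicitly, since the paper does not.

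The real work is the case $n_1=1$ or $n_r=1$. Here the crude bound $\dim A\ge n$ from the preceding Corollary is \emph{not} enough. The reason is that for $r\ge4$ one generally has $D_r(n)>n$: Theorem~\ref{local_nil4_bound} already shows $D_4(n)\ge n$ for $n<23$. So I would instead use the full strength of Theorem~\ref{local_bound} at the actual signature. For $n_1=1$, the factor $1+n_r$ cancels and one gets the sharper bound
\[
\dim A\ge 1+n_r+S+\left\lceil\frac{P}{n_r+1}\right\rceil = n+\left\lceil\frac{P}{n_r+1}\right\rceil .
\]
The goal is then a \emph{refinement comparison}: exhibit an admissible signature whose $F_r$-value does not exceed this quantity. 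The natural candidate moves one unit from an interior layer of size $\ge2$ into the first layer. This gives $(2,n_2,\dots,n_j-1,\dots,n_r)$, with $S'=S-1$ and $P'=P-(S-n_j)$. Choosing $j$ so that $n_j$ is the smallest interior layer of size $\ge2$ makes the loss in $P$ as large as possible. If $n_r=1$ as well, I would apply the same move at the bottom layer. The case $n_r=1$ with $n_1\ge2$ is symmetric, via the symmetry of $F_r$ under $n_1\leftrightarrow n_r$.

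The main obstacle is the inequality
\[
1+2n_r+\left\lceil\frac{(S-1)(n_r+2)+P'}{2n_r+1}\right\rceil \le n+\left\lceil\frac{P}{n_r+1}\right\rceil .
\]
This must be shown for all such configurations, together with the degenerate case in which every interior layer has size $1$ and no unit can be moved. For the degenerate case I would first try moving a unit from the first layer of size $\ge2$ elsewhere in the signature, or a unit out of $n_r$. I would attack the inequality by clearing denominators and treating it as a quadratic inequality in $n_r$ and $S$, using $P\ge\binom{r-2}{2}$ and $S\ge r-2$. If the inequality fails in a bounded set of small cases, I would close those by the same kind of finite check the paper uses in Theorems~\ref{local_nil3_bound} and~\ref{local_nil4_bound}.
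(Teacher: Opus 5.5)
You are right that the paper's one-line justification (``immediate from Theorem~\ref{local_bound}'') only handles signatures with $n_1,n_r\ge2$. You are also right that the bound $\dim A\ge n$ from the preceding corollary does not cover the remaining signatures when $D_r(n)>n$, which already happens for $r=3$. But your repair cannot work. For $n_1=1$ or $n_r=1$ the statement is false, so the refinement inequality you single out must fail, and it does.

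Take $r=3$ and signature $(1,2,2)$, so $n=5\ge r+2$.
\begin{itemize}
\item $F_3(1,2,2)=1+2+\lceil 6/3\rceil=5$.
\item Your move gives $(2,1,2)$, with $F_3(2,1,2)=1+4+\lceil 4/5\rceil=6$.
\item Allowing an empty middle layer, $(2,0,3)$ and $(3,0,2)$ only give $7$. Hence $D_3(5)=6$.
\end{itemize}
This is realized by an actual algebra. Let $R=K[x,y]/(x^2,xy^2,y^3)$, with basis $1,x,y,xy,y^2$, $\mathfrak m^2=\langle xy,y^2\rangle$ and $\mathfrak m^3=0$. Its regular representation $A=\{L_f: f\in R\}\subseteq\operatorname{End}_K(R)\cong M_5(K)$ is local, has radical nilpotent of degree $3$ with Loewy signature $(1,2,2)$, and satisfies $C(A)=\operatorname{End}_R(R)=A$. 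So $A$ is maximal commutative with $\dim A=5<6=D_3(5)$, and no finite check can close this case.

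What is true is $\dim A\ge D_r(n)$ under the extra hypothesis $n_1,n_r\ge2$; this is exactly the paper's one-line argument. Combining Theorem~\ref{local_bound} with the preceding corollary then gives $\dim A\ge\min\{n,D_r(n)\}$ in general. That weaker form is all the Courter discussion needs.

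Independently, for $r\ge4$ you should not use Theorem~\ref{local_bound} as a black box, even in the admissible case. Its proof claims that $[X,\lambda_i]$ is supported only in the socle block. That holds for $r=3$ but not for $r\ge4$: a product of two strictly lower block triangular matrices can be nonzero in every block $(j,i)$ with $j\ge i+2$, so $\operatorname{rank}T_i$ is not bounded by $n_1n_r$. The theorem as stated actually fails. If $N$ is a nilpotent $4\times4$ Jordan block, then $K[N]$ is local maximal commutative of dimension $4$ with signature $(1,1,1,1)$, whereas $F_4(1,1,1,1)=1+1+\lceil 5/2\rceil=5$. So for $r\ge4$ your first case rests on an unproved inequality.
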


\begin{proof}
	Immediate from Theorem~\ref{local_bound}.
\end{proof}\medskip

\begin{figure}[ht]
	\centering
	\includegraphics[width=0.9\textwidth]{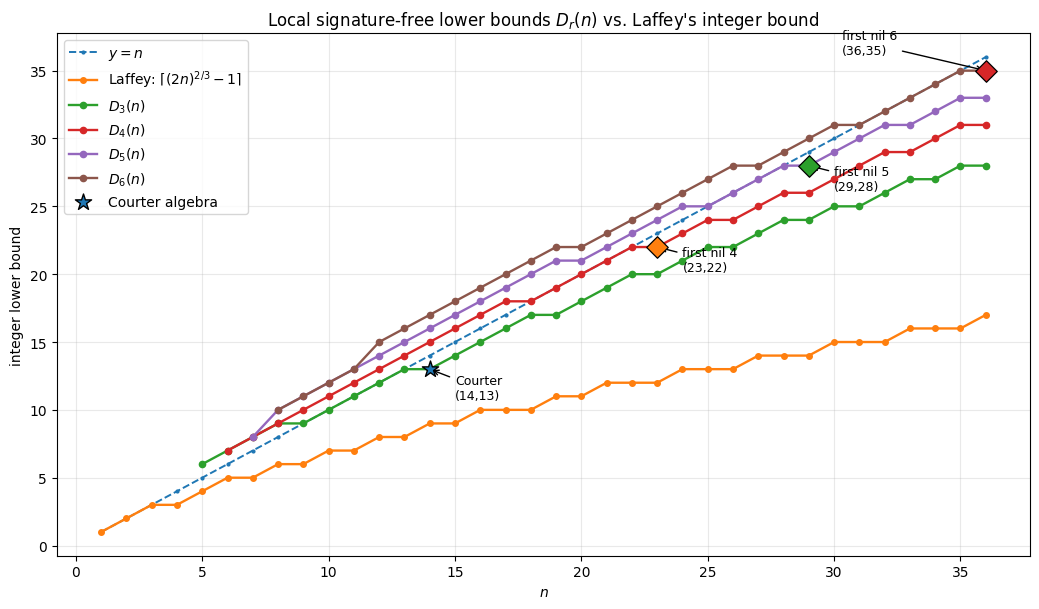}
	\caption{
		Comparison of the signature-free local lower bounds
		\(D_r(n)\) for nilpotency degrees \(r=3,4,5,6\), together with
		Laffey's estimate and the trivial bound \(\dim A=n\).
		The first dimensions where the bounds allow Courter-like algebras are
		\((14,13)\), \((23,22)\), \((29,28)\), and \((36,35)\),
		respectively.
	}
	\label{Courters}
\end{figure}

Figure~\ref{Courters} suggests the increasing rigidity of local maximal commutative algebras as the nilpotency degree grows: Courter-like examples appear later for higher nilpotency degrees.

\begin{remark}[Persistence of the Courter phenomenon]
	Figure~\ref{Courters} shows that although increasing the nilpotency	degree makes local maximal commutative algebras progressively more	rigid, the Courter phenomenon does not disappear.
	
	The first dimensions where the present signature-free local bounds allow
	\[
	\dim A<n
	\]
	occur at
	\[
	n=14,\ 23,\ 29,\ 36
	\]
	for nilpotency degrees
	\[
	3,\ 4,\ 5,\ 6,
	\]
	respectively. This suggests that the Courter phenomenon is not confined to the 	nilpotency degree \(3\) case, but persists in higher	nilpotency degrees, with the exceptional dimensions shifted upward.
\end{remark}

\begin{remark}[Laffey-type asymptotics]
	For nilpotency degree \(3\), the local bound
	\[
	F_3(n_1,n_2,n_3)
	=
	1+n_1n_3+
	\left\lceil
	\frac{n_2(n_1+n_3)}{n_1n_3+1}
	\right\rceil
	\]
	has the same asymptotic scale as Laffey's classical estimate.
	
	Indeed, ignoring the ceiling function and considering the symmetric
	regime
	\[
	n_1\approx n_3\approx x,
	\qquad
	n_2\approx n-2x,
	\]
	one obtains
	\[
	F_3\approx x^2+\frac{2n}{x},
	\]
	whose minimum occurs at \(x^3\approx n\). Hence
	\[
	F_3\approx 3n^{2/3}.
	\]
	
	The same heuristic applies to every fixed nilpotency degree \(r\ge3\): 	the additional interaction terms contribute only lower-order corrections, 	so
	\[
	D_r(n)\approx 3n^{2/3}.
	\]
	
	Thus the present local bounds have the same exponent \(2/3\) as
	Laffey's estimate
	\[
	(2n)^{2/3}-1,
	\]
	but with a larger leading constant
	\[
	3>2^{2/3}.
	\]
	This is consistent with their stronger performance in the Courter range.
\end{remark}

\subsection{Global non-local bounds}

We now pass from local algebras to arbitrary maximal commutative
subalgebras of \(M_n(K)\).\medskip

\begin{remark}
	Any signature-free local lower bound immediately globalizes by additivity
	of dimensions under direct sums. More precisely, if
	\[
	A=A_1\oplus\cdots\oplus A_s,
	\]
	where $A_i\subseteq M_{m_i}(K)$
	are local maximal commutative algebras and
	\[
	m_1+\cdots+m_s=n,
	\]
	then
	\[
	\dim A=\sum_{i=1}^s \dim A_i.
	\]	
	Hence any lower bounds for the summands yield a global lower bound for
	\(A\).
\end{remark}

\begin{remark}
	The asymptotic estimate
	\[
	D_3(n)\approx 3n^{2/3}
	\]
	suggests that passing to direct sums should not improve extremal
	dimensions.
	
	Indeed, if
	\[
	z(n)=n-3n^{2/3}
	\]
	models the dimension saving, then
	\[
	z(a+b)-z(a)-z(b)
	=
	3\bigl(a^{2/3}+b^{2/3}-(a+b)^{2/3}\bigr)>0.
	\]
	Thus the asymptotic saving is larger for one local algebra of size
	\(a+b\) than for a direct sum of smaller blocks.
	
This suggests that the global extremal problem may be essentially local.
\end{remark}

\begin{corollary}
	By the local bounds and the direct-sum decomposition, no maximal
	commutative algebra \(A\subseteq M_n(K)\) with \(\dim A<n\) exists for
	\(n\le13\).
\end{corollary}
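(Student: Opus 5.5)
The plan is to reduce to the local case through the direct-sum decomposition and then handle each local summand according to the nilpotency degree of its radical.

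First, write an arbitrary maximal commutative \(A\subseteq M_n(K)\) as a direct sum of local algebras. Since \(K\) is algebraically closed, the primitive idempotents of \(A\) decompose \(K^n=\bigoplus_i e_iK^n\) into \(A\)-invariant subspaces of sizes \(m_i\), with \(\sum m_i=n\). In this splitting \(A=A_1\oplus\cdots\oplus A_s\), where each \(A_i\subseteq M_{m_i}(K)\) is local. The idempotents lie in \(A=C(A)\), so any matrix commuting with \(A\) also commutes with every \(e_i\) and is therefore block diagonal. By the earlier Remark on centralizers of direct sums, each \(A_i\) is then maximal commutative in \(M_{m_i}(K)\). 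Because \(\dim A=\sum\dim A_i\) and \(n=\sum m_i\) with every \(m_i\le n\le 13\), it suffices to prove \(\dim A_i\ge m_i\) for every local maximal commutative \(A_i\subseteq M_m(K)\) with \(m\le 13\).

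Next, split the local case \(A\subseteq M_m(K)\) according to the nilpotency degree \(r\) of \(J=\operatorname{rad}(A)\).
\begin{itemize}
\item If \(r=1\), then \(J=0\), so \(A=KI_m\). Its centralizer is \(M_m(K)\), so maximality forces \(m=1\) and \(\dim A=1=m\).
\item If \(r=2\), the introductory argument gives \(\dim A=1+ab\ge a+b=m\).
\item If \(r=3\), Theorem~\ref{local_nil3_bound} gives \(\dim A\ge m\) for \(m<14\).
\item If \(r=4\), Theorem~\ref{local_nil4_bound} gives \(\dim A\ge m\) for \(m<23\).
\item If \(r\ge 5\), Corollary after Theorem~\ref{local_bound} settles the case \(n_1=1\) or \(n_r=1\). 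It remains to show that \(F_r(n_1,\dots,n_r)\ge m\) for all compositions of \(m\le 13\) with \(n_1,n_r\ge 2\) and all \(r\ge 5\).
\end{itemize}

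The main obstacle is this last step. I would first try to avoid a case-by-case computation with a monotonicity argument. For a composition with \(r\ge 4\), the middle quantity \(S=\sum_{i=2}^{r-1} n_i\) is unchanged when two adjacent middle layers are merged, while \(P\) can only drop. Hence
\[
F_r(n_1,\dots,n_r)\ \ge\ F_3(n_1,S,n_r),
\]
since the numerator \(S(n_1+n_r)+P\) is at least \(S(n_1+n_r)\). Theorem~\ref{local_nil3_bound}, applied to the signature \((n_1,S,n_r)\) with \(n_1+S+n_r=m<14\), then gives \(F_r\ge m\) directly. More precisely, the finite check behind it shows \(F_3(n_1,n_2,n_3)\ge m\) for every triple with \(n_1,n_3\ge 2\) and \(m\le 13\), and that check is the same finite verification as in Theorem~\ref{local_nil3_bound}.

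The one point I would verify explicitly is this: the finite check in Theorem~\ref{local_nil3_bound} is a purely arithmetic statement about \(F_3\), not about existing algebras. So it applies to \((n_1,S,n_r)\) even though no degree-\(3\) algebra with that signature need exist. Combining the local bounds with additivity over the summands then yields \(\dim A\ge n\) for all \(n\le 13\).
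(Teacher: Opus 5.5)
Your reduction to local summands is correct, and you justify it more carefully than the paper does: the primitive idempotents lie in \(A=C(A)\), which forces the centralizer to be block diagonal. Your treatment of the cases \(r=1,2,3\) is fine. The merging inequality \(F_r(n_1,\dots,n_r)\ge F_3(n_1,S,n_r)\) and the arithmetic fact \(F_3\ge m\) for \(m\le 13\) are also true. Structurally this is the paper's own argument.

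\textbf{The gap.} For \(r\ge 4\) you invoke Theorem~\ref{local_nil4_bound}, Theorem~\ref{local_bound} and its corollary, and the bound \(F_r\) behind them is not valid. Its proof claims that \([X,\lambda_i]\) lies in the socle block, so that \(\operatorname{rank}T_i\le n_1n_r\). This holds only for \(r=3\). For \(r\ge 4\), a product of two strictly lower block-triangular matrices is supported on all blocks \((i,j)\) with \(i-j\ge 2\) (for \(r=4\): the blocks \((3,1),(4,1),(4,2)\)). So a single generator can impose more than \(n_1n_r\) conditions.

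Two examples show the bound itself fails:
\begin{itemize}
\item Let \(N\) be the nilpotent Jordan block of size \(4\). Then \(K[N]\subseteq M_4(K)\) is local maximal commutative with signature \((1,1,1,1)\) and dimension \(4\). Corollary~\ref{general4} would demand dimension at least \(1+1+\lceil 5/2\rceil=5\).
\item The failure also occurs exactly where you use the bound, with \(n_1,n_r\ge 2\) and \(m\le 13\). Let \(B=K[x,y]/(x^2,xy,y^3)\) act on its dual \(B^*\); this has signature \((2,1,1)\) and is maximal commutative because \(\operatorname{End}_B(B^*)\cong B\). Let \(C=K[z,w]/(z,w)^2\) act regularly, with signature \((1,2)\). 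Since centralizers of tensor products factor, \(B\otimes C\subseteq M_{12}(K)\) is local maximal commutative. It has signature \((2,5,3,2)\) and dimension \(12\), whereas Corollary~\ref{general4} would force dimension at least \(1+4+\lceil 47/5\rceil=15\).
\end{itemize}

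\textbf{Consequences.} Your monotonicity step therefore compares \(m\) with a number that is not a lower bound for \(\dim A\), and the cases \(r\ge 4\) remain unproved. Only the subcase \(n_1=1\) survives on independent grounds. There \(K^n/JK^n\) is one-dimensional, so by Nakayama \(K^n=Av\) for a suitable \(v\), and \(a\mapsto av\) surjects \(A\) onto \(K^n\), giving \(\dim A\ge n\). The subcases \(n_r=1\) and \(n_1,n_r\ge 2\) are not covered.

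The paper's proof has exactly the same dependence, since its ``finite verification of the local bounds'' includes the degree-\(4\) and higher bounds. What your argument and the paper's actually establish is the statement for algebras whose local summands all have radical of nilpotency degree at most \(3\). Closing the gap would need a correct centralizer estimate for \(r\ge 4\), for instance a space of test matrices whose commutators with the generators genuinely land in an \(n_1n_r\)-dimensional block. Alternatively, it would need a different argument for those degrees.
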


\begin{proof}
	This follows from the finite verification of the local bounds for
	\(n\le13\), together with additivity of dimension under direct sums.
\end{proof}
	
\section{Stack algebras}

The previous sections show that nilpotency degree \(3\) yields the
smallest local lower bounds for Courter-like algebras. The classical
Courter example belongs to this class, with Loewy signature \((2,10,2)\).

 We now introduce a stacking construction for local commutative algebras
of nilpotency degree \(3\). Unlike the ordinary direct sum, stacked
algebras share the same identity and socle, which reduces the resulting
dimension.

\begin{definition}[Stack algebra]
	Let
	\[
	\mathcal A_i\subseteq M_{n_1+s_i+n_3}(K),
	\qquad i=1,2,
	\]
	be local commutative algebras of Loewy signatures
	\[
	(n_1,s_i,n_3),
	\]
	written in Loewy form as
	\[
	\mathcal A_i = KI+\Lambda_i+\Sigma,
	\]
	where
	\[
	\Sigma=M_{n_3\times n_1}(K)
	\]
	is the common socle block.
	
	The stack algebra
	\[
	\mathcal A_1\star\mathcal A_2
	\subseteq
	M_{n_1+s_1+s_2+n_3}(K)
	\]
	is obtained by embedding the generator spaces
	\(\Lambda_1\) and \(\Lambda_2\) into disjoint middle blocks with respect
	to the decomposition
	\[
	K^{n_1}\oplus K^{s_1}\oplus K^{s_2}\oplus K^{n_3}.
	\]
\end{definition}

\begin{proposition}
	The stack algebra
	\[	\mathcal A_1\star\mathcal A_2	\]
	is a local commutative algebra of Loewy signature
	\[	(n_1,s_1+s_2,n_3).	\]
	
	Moreover,
	\[	\dim(\mathcal A_1\star\mathcal A_2)	=
	\dim\mathcal A_1+\dim\mathcal A_2-(1+n_1n_3).	\]
\end{proposition}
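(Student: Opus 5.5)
We prove the statement by writing the stack explicitly in block form and checking each claim directly. Each generator of \(\Lambda_i\) has the Loewy form
\[
\lambda=\begin{pmatrix}0&0&0\\ A&0&0\\ B&C&0\end{pmatrix},\qquad A\in M_{s_i\times n_1}(K),\ B\in M_{n_3\times n_1}(K),\ C\in M_{n_3\times s_i}(K).
\]
With respect to \(K^{n_1}\oplus K^{s_1}\oplus K^{s_2}\oplus K^{n_3}\), a generator of \(\Lambda_1\) is embedded with \(A\) in block \((2,1)\), \(C\) in block \((4,2)\) and \(B\) in block \((4,1)\). A generator of \(\Lambda_2\) is embedded with \(A\) in block \((3,1)\), \(C\) in block \((4,3)\) and \(B\) in block \((4,1)\). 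The stack is then
\[
\mathcal A_1\star\mathcal A_2=KI+\iota_1(\Lambda_1)+\iota_2(\Lambda_2)+\Sigma .
\]

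First I check that this space is a commutative algebra. The product of two strictly lower embedded elements is supported only in block \((4,1)\), so all products of radical elements lie in \(\Sigma\). Products involving \(\Sigma\) vanish, as in the proof of the Loewy form structure proposition. Hence the space is closed under multiplication. For commutativity:
\begin{itemize}
\item for \(\lambda,\mu\) both in \(\Lambda_1\), the commutator is \(C_\lambda A_\mu-C_\mu A_\lambda\), which vanishes because \(\mathcal A_1\) is commutative;
\item the same holds for two generators of \(\Lambda_2\);
\item for \(\lambda\in\Lambda_1\) and \(\mu\in\Lambda_2\), both products \(\lambda\mu\) and \(\mu\lambda\) vanish, since the middle blocks are disjoint: the \((4,1)\) entry of \(\lambda\mu\) is \(C_\lambda\cdot 0+0\cdot A_\mu=0\), and similarly for \(\mu\lambda\).
\end{itemize}
The radical \(J=\iota_1(\Lambda_1)+\iota_2(\Lambda_2)+\Sigma\) is nilpotent, so every element is scalar plus nilpotent and the algebra is local.

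Next I compute the Loewy signature. By direct computation, \(JK^n\) is spanned by the images of the \(A\)-blocks together with \(\Sigma K^{n}=K^{n_3}\). For \(\mathcal A_i\) the Loewy layers have dimensions \((n_1,s_i,n_3)\), so the \(A\)-blocks of \(\Lambda_i\) together with \(K^{n_3}\) span \(K^{s_i}\oplus K^{n_3}\). Also \(J^2K^n=K^{n_3}\), because the socle block \(\Sigma\) is the full matrix space \(M_{n_3\times n_1}(K)\) and hence already maps onto \(K^{n_3}\). Therefore
\[
JK^n=K^{s_1}\oplus K^{s_2}\oplus K^{n_3},\qquad J^2K^n=K^{n_3},
\]
and the signature is \((n_1,s_1+s_2,n_3)\).

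Finally I prove the dimension formula. Since \(\Lambda_i\) is linearly independent modulo \(\Sigma\) inside \(\mathcal A_i\), I show the sum \(KI+\iota_1(\Lambda_1)+\iota_2(\Lambda_2)+\Sigma\) is direct. Suppose \(\iota_1(x)+\iota_2(y)\in\Sigma\). Projecting onto blocks \((2,1)\) and \((4,2)\), which only \(\iota_1\) occupies, shows that \(x\) has vanishing \(A\)- and \(C\)-parts, so \(x\in\Sigma\) in \(\mathcal A_1\); hence \(x=0\). The same argument gives \(y=0\). The scalar part splits off from the diagonal. Thus
\[
\dim(\mathcal A_1\star\mathcal A_2)=1+d_1+d_2+n_1n_3=\dim\mathcal A_1+\dim\mathcal A_2-(1+n_1n_3).
\]

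The main obstacle is the signature computation, specifically showing that the socle layer \(J^2K^n\) is exactly \(K^{n_3}\) and not smaller. This is handled by the fact that \(\Sigma\) is the full block \(M_{n_3\times n_1}(K)\) and is included in the stack by definition.
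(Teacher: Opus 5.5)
Your route is the paper's: disjoint middle blocks kill mixed products, commutativity within each family is inherited, and the two bricks share exactly $KI$ and $\Sigma$. Your treatment of closure, commutativity, locality, $JK^n$ and the dimension count is correct and more detailed than the paper's sketch.

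The problem is the step you yourself single out as the main obstacle: your justification of $J^2K^n=K^{n_3}$ is wrong. Fullness of $\Sigma$ gives $\Sigma K^n=K^{n_3}$, hence $K^{n_3}\subseteq JK^n$, but it says nothing about $J^2K^n$. Indeed $J^2K^n=J(JK^n)$, and $JK^n=K^{s_1}\oplus K^{s_2}\oplus K^{n_3}$ has zero component in the first layer. $\Sigma$, like the $A$- and $B$-blocks, only reads that first layer. So $\Sigma$ annihilates $JK^n$ and contributes nothing; in fact
\[
J^2K^n=\sum_{\lambda\in\Lambda_1}\operatorname{Im}C_\lambda+\sum_{\mu\in\Lambda_2}\operatorname{Im}C_\mu .
\]
Your argument would equally ``prove'' $J^2K^n=K^{n_3}$ for a stack in which all $C$-blocks vanish. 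There $J^2=0$, even though $\Sigma$ is full.

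The correct reason uses the hypothesis on the bricks, which your argument never invokes at this point. Since $\mathcal A_1$ has signature $(n_1,s_1,n_3)$, the same computation inside $\mathcal A_1$ gives $\operatorname{rad}(\mathcal A_1)^2K^{n_1+s_1+n_3}=\sum_{\lambda\in\Lambda_1}\operatorname{Im}C_\lambda=K^{n_3}$. These $C$-blocks sit unchanged in block $(4,2)$ of the stack, so $J^2K^n\supseteq K^{n_3}$. The reverse inclusion follows from $J^2\subseteq\Sigma$. With this one-line replacement your proof is complete.
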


\begin{proof}
	Since the generators coming from \(\mathcal A_1\) and \(\mathcal A_2\)
	act on disjoint middle blocks, all mixed products vanish, while the
	original commutativity relations inside each family are preserved.
	Hence
	\[
	\mathcal A_1\star\mathcal A_2
	\]
	is commutative.
	
	Its Loewy signature is
	\[
	(n_1,s_1+s_2,n_3),
	\]
	since the outer layers remain unchanged and the middle layers combine.
	
	Finally, the two algebras share exactly the scalar identity and the
	common socle of dimension \(n_1n_3\), so
	\[
	\dim(\mathcal A_1\star\mathcal A_2)	=
	\dim\mathcal A_1+\dim\mathcal A_2-(1+n_1n_3).
	\]
\end{proof}

\subsection{The first brick: an algebra of signature \((2,5,2)\)}

Let
\[
\mathcal E\subseteq M_9(K)
\]
be the algebra
\[
\mathcal E
=
KI+\Sigma+\langle \lambda_1,\lambda_2,\lambda_3,\lambda_4\rangle,
\]
where
\[
\Sigma=
\left\{
\begin{pmatrix}
	0&0&0\\	0&0&0\\	S&0&0
\end{pmatrix}
:\;
S\in M_2(K)
\right\},
\]
and
\[
\lambda_i=
\begin{pmatrix}
	0&0&0\\	A_i&0&0\\	0&B_i&0
\end{pmatrix},
\]
with respect to the decomposition
\[
K^9=K^2\oplus K^5\oplus K^2.
\]

	Take
\[ A_1= \begin{pmatrix} 1 & -1\\ 0 & 0\\ -1 & 0\\ 1 & 0\\ 0 & -1 \end{pmatrix},\quad   
   A_2= \begin{pmatrix} 0 & 0\\ 0 & 1\\ 0 & 0\\ 0 & 1\\ -1 & 2 \end{pmatrix},\quad  
   A_3= \begin{pmatrix} 1 & 0\\ 0 & 1\\ 0 & 1\\ 0 & -1\\ 1 & 0 \end{pmatrix},\quad  
   A_4= \begin{pmatrix} 0 & 1\\ -1 & -1\\ 0 & -1\\ 0 & 2\\ 0 & 0 \end{pmatrix} \] 
\[ B_1= \begin{pmatrix} 0 & 0 & 1 & 0 & 0\\ 1 & 0 & -1 & -1 & 0 \end{pmatrix} ,\quad  
   B_2= \begin{pmatrix} -1 & 0 & 0 & 1 & 1\\ -1 & 0 & 1 & 2 & 2 \end{pmatrix} \] 
\[ B_3= \begin{pmatrix} -1 & -1 & -1 & 0 & 0\\ 1 & 0 & 1 & 1 & -1 \end{pmatrix} ,\quad  
   B_4= \begin{pmatrix} 1 & 0 & 2 & 1 & 0\\ 0 & 2 & 0 & 0 & 0 \end{pmatrix} \]

Then
\[
B_iA_j=B_jA_i\qquad\text{for all }i,j,
\]
so the generators commute modulo the socle. Hence
$\mathcal{E}$
is commutative.

Moreover,
\[
\dim \langle \lambda_1,\lambda_2,\lambda_3,\lambda_4\rangle=4,
\]
and therefore
\[
\dim \mathcal E=1+4+4=9.
\]

A straightforward symbolic computation shows that
\[
\dim C(\mathcal E)=9.
\]
Hence $C(\mathcal E)=\mathcal E, $
so
$\mathcal{E}$
is maximal commutative. Thus
$\mathcal{E}$
is a local maximal commutative algebra of Loewy signature
$(2,5,2),$ with $d=\dim\Lambda=4=s-1.$

By the local lower bounds established earlier, no algebra of signature
$(2,s,2)$ with $s<5$ can satisfy $d<s.$ Thus $\mathcal{E}$ is the smallest brick of signature \((2,s,2)\) for which the stack construction yields a dimension gain.

\subsection{The second brick: a one-dimensional middle brick}

Besides the algebra \(\mathcal E\), we use an elementary brick adding one
middle dimension between the same outer layers \(K^2\) and \(K^2\). 

\[
\mathcal D=
\left\{
\begin{pmatrix}
	a_1&0&0&0&0\\
	0&a_1&0&0&0\\
	a_2&0&a_1&0&0\\
	a_3&a_4&0&a_1&0\\
	a_5&a_6&a_2&0&a_1
\end{pmatrix}
: a_1,\dots,a_6\in K
\right\}
\subseteq M_5(K).
\]

It is immediate that \(\mathcal D\) is commutative, local, and has dimension
\(6\). A direct centralizer computation gives \(C(\mathcal D)=\mathcal D\),
so \(\mathcal D\) is maximal commutative.

It will be used as a one-dimensional middle brick in the stacking construction.


\begin{lemma}[Mixed rigidity of the bricks]\label{mixed_rigidity}
	Let \(\mathcal P,\mathcal Q\in\{\mathcal E,\mathcal D\}\).
	Then the only linear map
	\[
	T:K^{s_{\mathcal Q}}\to K^{s_{\mathcal P}}
	\]
	satisfying
	\[
	T A_\beta^{\mathcal Q}=0,
	\qquad
	B_\alpha^{\mathcal P}T=0
	\]
	for all generators is \(T=0\).
\end{lemma}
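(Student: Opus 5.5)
The plan is to reduce the statement to a rank check for each brick. The two conditions say that $T$ annihilates the column space of every $A^{\mathcal Q}_\beta$, and that the image of $T$ lies in the common kernel of all the $B^{\mathcal P}_\alpha$. So it suffices to prove, for each brick $\mathcal X\in\{\mathcal E,\mathcal D\}$, one of the following:
\[
\sum_\beta \operatorname{im}A^{\mathcal X}_\beta=K^{s_{\mathcal X}}
\qquad\text{or}\qquad
\bigcap_\alpha \ker B^{\mathcal X}_\alpha=0 .
\]
If the first holds for $\mathcal Q$, then $T$ vanishes on all of $K^{s_{\mathcal Q}}$. If the second holds for $\mathcal P$, then $\operatorname{im}T=0$. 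In either case $T=0$. I will use the first condition for $\mathcal Q$, since it already covers all four pairs $(\mathcal P,\mathcal Q)$.

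First I write $\mathcal D$ in Loewy form for the decomposition $K^5=K^2\oplus K\oplus K^2$, with layers $\langle e_1,e_2\rangle$, $\langle e_3\rangle$, $\langle e_4,e_5\rangle$. The parameters $a_3,\dots,a_6$ fill the socle block $\Sigma=M_{2\times 2}(K)$. The only non-socle generator is the $a_2$-matrix, and it has
\[
A^{\mathcal D}=\begin{pmatrix}1&0\end{pmatrix},\qquad
B^{\mathcal D}=\begin{pmatrix}0\\1\end{pmatrix}.
\]
Hence if $\mathcal Q=\mathcal D$, then $T$ is an $s_{\mathcal P}\times 1$ column and $TA^{\mathcal D}$ is the outer product of $T$ with the nonzero row $(1,0)$. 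This product vanishes only if $T=0$. Dually, if $\mathcal P=\mathcal D$, then $B^{\mathcal D}T=0$ says that the second row of $T$ (now a $2\times s_{\mathcal Q}$ matrix) vanishes. That alone does not force $T=0$, so for the case $\mathcal P=\mathcal D$, $\mathcal Q=\mathcal E$ I rely on the $\mathcal E$ computation below.

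For $\mathcal Q=\mathcal E$ I need to show that the eight columns of $A_1,\dots,A_4$ span $K^5$.
\begin{itemize}
\item The first column of $A_4$ is $-e_2$.
\item The first column of $A_2$ is $-e_5$.
\item The first column of $A_3$ is $e_1+e_5$, which gives $e_1$.
\item The second column of $A_2$ is $e_2+e_4+2e_5$, which gives $e_4$.
\item The first column of $A_1$ is $e_1-e_3+e_4$, which gives $e_3$.
\end{itemize}
So the column span is all of $K^5$, and $TA_\beta=0$ for all $\beta$ forces $T=0$ whatever $\mathcal P$ is. As a consistency check, one could also verify that the rows of $B_1,\dots,B_4$ span $K^5$, so that the dual condition works as well; this is not needed for the proof.

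The main obstacle is bookkeeping rather than mathematics. One must identify correctly which parameters of $\mathcal D$ are socle entries and which form the genuine middle generator, and the rank computation for $\mathcal E$ must be carried out without arithmetic slips. Once the generators are pinned down, each case is a one-line linear-algebra argument.
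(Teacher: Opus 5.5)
Your proof is correct, and it takes a different route from the paper's. The paper treats each of the four pairs $(\mathcal P,\mathcal Q)$ as an opaque homogeneous system in the entries of $T$. It imposes both $TA^{\mathcal Q}_\beta=0$ and $B^{\mathcal P}_\alpha T=0$, then checks by computer (Appendix) that the coefficient matrices have full ranks $25,5,5,1$.

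You instead isolate a structural sufficient condition, $\sum_\beta\operatorname{im}A^{\mathcal Q}_\beta=K^{s_{\mathcal Q}}$. You verify it by hand from five explicit columns of $A_1,\dots,A_4$ (your extractions are all correct) and from $A^{\mathcal D}=\begin{pmatrix}1&0\end{pmatrix}\neq 0$. This shows several things the rank computation hides:
\begin{itemize}
\item the conditions $B^{\mathcal P}_\alpha T=0$ are redundant;
\item rigidity depends only on $\mathcal Q$, so it holds for an arbitrary $\mathcal P$;
\item the proof can be checked without software;
\item it gives a reusable criterion for the vanishing of mixed blocks when stacking further bricks.
\end{itemize}
The paper's computation is uniform and needs no insight into which columns to pick, but it only certifies the four specific cases.

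One side remark of yours is wrong, though harmless because you never use it. When $\mathcal P=\mathcal D$, the map $T$ is $1\times s_{\mathcal Q}$, not $2\times s_{\mathcal Q}$, and $B^{\mathcal D}T=\begin{pmatrix}0\\ T\end{pmatrix}$. So $B^{\mathcal D}T=0$ already forces $T=0$: $B^{\mathcal D}$ is injective, and the dual criterion also holds for $\mathcal D$.

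Your reading $B^{\mathcal D}=(0,1)^T$ matches the displayed matrix $\mathcal D$. The appendix code uses $(1,0)^T$ instead, which does not affect any of the ranks.
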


\begin{proof}
	The only point requiring verification is the vanishing of the mixed
	centralizer blocks between distinct stacked components.	For the four possible pairs
	\[
	(\mathcal E,\mathcal E),\quad
	(\mathcal E,\mathcal D),\quad
	(\mathcal D,\mathcal E),\quad
	(\mathcal D,\mathcal D),
	\]
	this reduces to finite homogeneous linear systems for the corresponding
	mixed block \(T\), given by the conditions
	\[
	T A_\beta^{Q}=0,
	\qquad
	B_\alpha^{P}T=0,
	\]
	for all generators of the pair \((P,Q)\).	
	The coefficient matrices of these systems have ranks
	\[
	25,\quad 5,\quad 5,\quad 1,
	\]
	respectively, equal to the numbers of unknowns. Hence each system has only
	the trivial solution \(T=0\), so all mixed centralizer blocks vanish.
	
	The explicit rank computation is given in Appendix~A.
\end{proof}

\begin{theorem}[Maximality of stacked brick algebras]
	For all integers \(q\ge 1\) and \(r\ge 0\), the algebra
	\[
	\mathcal A_{q,r}=\mathcal E^{\star q}\star\mathcal D^{\star r}
	\]
	is local maximal commutative. It has Loewy signature
	\[
	(2,5q+r,2)
	\]
	and dimension
	\[
	\dim \mathcal A_{q,r}=5+4q+r.
	\]
\end{theorem}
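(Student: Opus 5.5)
Write $\mathcal A=\mathcal A_{q,r}$ in Loewy form with respect to
\[
K^n=K^2\oplus M_1\oplus\cdots\oplus M_{q+r}\oplus K^2 ,
\]
where $M_1,\dots,M_q\cong K^5$ carry copies of $\mathcal E$ and $M_{q+1},\dots,M_{q+r}\cong K$ carry copies of $\mathcal D$. Each generator has the block shape with a column $A_\alpha^{(j)}:K^2\to M_j$ and a row $B_\alpha^{(j)}:M_j\to K^2$, supported on a single middle block.

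\textbf{Dimension and commutativity.} These follow by iterating the stack proposition. Since $\dim\mathcal E=9$ and $\dim\mathcal D=6$, each further $\mathcal E$ adds $9-5=4$ and each $\mathcal D$ adds $6-5=1$, giving $\dim\mathcal A=9+4(q-1)+r=5+4q+r$. For the Loewy signature $(2,5q+r,2)$ one needs $JK^n=\bigoplus M_j\oplus K^2$ and $J^2K^n=K^2$. The first holds because the columns of the $A_\alpha^{(j)}$ span each $M_j$, which I would check directly for $\mathcal E$ and $\mathcal D$. The second holds because the socle block already fills $K^2$.

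\textbf{Maximality.} This is the main step. Take $X\in C(\mathcal A)$. First, $X$ commutes with the full socle $\Sigma=M_{2\times 2}(K)$; this forces $X$ to preserve the flag $K^2_{\text{bottom}}\subseteq \ker\Sigma$, and likewise forces the top-left and bottom-right $2\times2$ diagonal blocks to be a common scalar, with zero blocks above the middle. Subtracting $\alpha I$ and using that $X$ commutes with the nilpotent generators, I would argue (via $X J^iK^n\subseteq J^iK^n$ and the radical layers being intrinsic to $\mathcal A$) that $X$ is lower block triangular with respect to $K^2\oplus\bigoplus M_j\oplus K^2$. Its middle diagonal part is a block matrix $(T_{jk})$ with $T_{jk}:M_k\to M_j$.

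The commutation relations with a generator $\lambda_\beta^{(k)}$ living in $M_k$ then read, in the $(2,1)$ and $(3,2)$ positions,
\[
T_{jk}A^{(k)}_\beta=A^{(j)}\text{-terms}\cdot X_{11},\qquad B^{(j)}T_{jk}=X_{33}B\text{-terms},
\]
and after scalar normalisation these give $T_{jk}A_\beta^{(k)}=0$ and $B_\alpha^{(j)}T_{jk}=0$ for $j\ne k$. By Lemma~\ref{mixed_rigidity}, all off-diagonal middle blocks vanish, so $T_{jk}=0$ for $j\ne k$.

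With the mixed blocks gone, the remaining blocks of $X$ (the diagonal $T_{jj}$, the $U$ and $V$ parts on $M_j$, and the corner $W$) decouple by component, except for the shared corner $W$. On component $j$ the conditions are exactly those saying that the compression of $X$ to $K^2\oplus M_j\oplus K^2$ centralizes the brick. Since each brick is maximal commutative ($C(\mathcal E)=\mathcal E$, $C(\mathcal D)=\mathcal D$), that compression lies in the brick, i.e. it equals $\alpha I+\sum c^{(j)}_\beta\lambda^{(j)}_\beta+\Sigma$-part. The only subtlety is the nonzero diagonal $T_{jj}$ and consistency of the scalar: the brick centralizer computation already forces $T_{jj}$ to be the same scalar, hence zero after subtracting $\alpha I$. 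Summing over $j$ gives $X\in\mathcal A$, so $C(\mathcal A)=\mathcal A$.

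\textbf{Main obstacle.} The step I expect to be hardest is justifying rigorously that a centralizing $X$ has the lower block triangular shape and that its mixed blocks satisfy precisely the homogeneous equations of Lemma~\ref{mixed_rigidity}, rather than inhomogeneous ones contaminated by the outer diagonal blocks. I would handle this by first showing that the $K^2$ diagonal blocks are scalar, using commutation with $\Sigma$ together with the fact that the $A$-columns span and the $B$-rows are jointly surjective. Only then would I extract the mixed equations.
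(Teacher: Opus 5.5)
Your maximality argument is essentially the paper's, and your compression step is spelled out more carefully than the paper's version. The paper's steps are: commutation with the socle fixes the block shape and gives a common scalar $\alpha$ on the outer layers; Lemma~\ref{mixed_rigidity} kills the mixed middle blocks $T_{jk}$ for $j\neq k$; the compressions to $K^2\oplus M_j\oplus K^2$ then centralize the bricks, so they lie in $C(\mathcal E)=\mathcal E$ or $C(\mathcal D)=\mathcal D$.

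What you call the main obstacle is not one. Work in the $3\times 3$ block form for $K^2\oplus\bigoplus_j M_j\oplus K^2$, and let $E_S$ be the matrix with $S$ in the $(3,1)$ block. Then $XE_S=E_SX$ for all $S\in M_2(K)$ already gives $X_{12}=X_{13}=X_{23}=0$ and $X_{33}S=SX_{11}$ for all $S$. Hence $X_{11}=X_{33}=\alpha I$, with no use of $A$-spanning or $B$-surjectivity. Write $\mathbf A_\beta,\mathbf B_\beta$ for the $(2,1)$ and $(3,2)$ blocks of a stacked generator, and $T$ for the middle diagonal block of $X$. Commutation with that generator is then exactly $(T-\alpha I)\mathbf A_\beta=0$, $\mathbf B_\beta(T-\alpha I)=0$ and $V\mathbf A_\beta=\mathbf B_\beta U$, which are homogeneous equations. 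In fact the columns of all the $\mathbf A_\beta$ span the middle layer, so the first family alone forces $T=\alpha I$, diagonal blocks included. This makes Lemma~\ref{mixed_rigidity} immediate.

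The one step whose justification is incorrect is the Loewy signature. One has $J^2K^n=J(JK^n)=J\bigl(\bigoplus_j M_j\oplus K^2\bigr)$, and every socle element annihilates $\bigoplus_j M_j\oplus K^2$. So the socle contributes nothing to $J^2K^n$. Its actual role is to put the bottom $K^2$ into $JK^n$, which your first claim needs in addition to the spanning of the $A$-columns.

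What gives $J^2K^n=K^2$ is that the $B$-blocks are jointly surjective onto $K^2$, and this is where $q\ge 1$ is used: the block $B_1$ of $\mathcal E$ already has rank $2$. Your argument never uses $q\ge1$. The same reasoning would ``show'' that $\mathcal D$ itself has signature $(2,1,2)$. In fact, for $\mathcal D$ one has $JK^5=\langle e_3,e_4,e_5\rangle$ and $J^2K^5=Ke_5$, so its signature is $(2,2,1)$.

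With this correction your direct check is sound. It is actually preferable to the paper's appeal to the stack proposition, since that proposition assumes each factor has signature $(n_1,s_i,n_3)$, and $\mathcal D$ does not literally satisfy this.
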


\begin{proof}
	By repeated application of the stack construction proposition,
	$\mathcal A_{q,r}$
	is a local commutative algebra with Loewy signature $(2,5q+r,2).$
	
	Since each copy of \(\mathcal E\) contributes four radical generators,
	while each copy of \(\mathcal D\) contributes one, and all stacked
	algebras share the same scalar identity and the same socle of dimension
	\(4\), we obtain
	\[
	\dim \mathcal A_{q,r}=1+4+4q+r=5+4q+r.
	\]
	
	It remains to prove maximal commutativity.
	
	Let
	\[
	X\in C(\mathcal A_{q,r}).
	\]
	With respect to the decomposition
	\[
	K^2
	\oplus
	K^5
	\oplus\cdots\oplus
	K^5
	\oplus
	K
	\oplus\cdots\oplus
	K
	\oplus
	K^2,
	\]
	commutation with the common socle forces \(X\) to preserve the Loewy
	filtration and to have equal scalar action on the outer diagonal blocks.
	
	Now consider the off-diagonal blocks between two distinct brick blocks.
	Such a block
	\[
	T:K^5\to K^5
	\]
	must satisfy
	\[
	TA_i=0,
	\qquad
	B_iT=0,
	\qquad i=1,2,3,4,
	\]
	because it must commute with all generators of the corresponding copy of
	\(\mathcal E\).
	By the rigidity lemma,
	\[
	T=0.
	\]
	Hence no nontrivial mixed blocks between brick components occur.
	
The same argument applies to mixed blocks involving \(\mathcal D\), which vanish
by the corresponding cases of Lemma~23. Therefore the centralizer splits
blockwise into the individual centralizers of the constituent bricks.
	
	Since $	C(\mathcal E)=\mathcal E$
	and $C(\mathcal D)=\mathcal D,$
	it follows that
	\[
	C(\mathcal A_{q,r})=\mathcal A_{q,r}.
	\]
	
	Thus $\mathcal A_{q,r}$
	is maximal commutative.
\end{proof}



\begin{remark}
	It is natural to ask whether the stack construction preserves maximal
	commutativity for arbitrary local maximal commutative algebras of nilpotency
	degree \(3\) with compatible outer layers. We do not use such a general
	statement here. In the construction below, maximality is verified directly for
	the explicit brick algebras by checking the vanishing of all mixed centralizer
	blocks; see Lemma~\ref{mixed_rigidity}.
\end{remark}


\section{Construction of Courter-like algebras}

We now show that Courter-like algebras occur in every dimension $n\ge14.$
The construction is based on stacking the brick algebra \(\mathcal E\) and
the diagonal brick \(\mathcal D\).

\begin{theorem}[Construction of Courter-like algebras]
	For every \(n\ge14\), write
	\[	n-4=5q+r,	\qquad	q\ge2,	\quad	0\le r<5.
	\]
	Let \(\mathcal E\) be the brick algebra of signature \((2,5,2)\), and let \(\mathcal D\) denote the brick algebra with one-dimensional middle layer introduced above. Define
	\[	\mathcal A_n	=	\mathcal E^{\star q}\star \mathcal D^{\star r}	\subseteq M_n(K).
	\]
	Then \(\mathcal A_n\) is a local maximal commutative algebra of Loewy
	signature
	\[
	(2,n-4,2)
	\]
	and
	\[	\dim \mathcal A_n	=	5+	\left\lceil	\frac{4(n-4)}5	\right\rceil.
	\]
	In particular, for every \(n\ge14\)
	\[	\dim\mathcal A_n<n.	\]
	
\end{theorem}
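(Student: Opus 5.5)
The plan is to reduce the statement to the preceding theorem on maximality of stacked brick algebras, so the only new work is arithmetic. First I check that the decomposition is well defined. For \(n\ge14\) we have \(n-4\ge10\). Division with remainder gives a unique \(q=\lfloor (n-4)/5\rfloor\) and \(r\in\{0,\dots,4\}\), and \(q\ge2\) follows from \(n-4\ge 10\). Since \(q\ge1\) and \(r\ge0\), the algebra \(\mathcal A_n=\mathcal E^{\star q}\star\mathcal D^{\star r}\) is exactly \(\mathcal A_{q,r}\) from the theorem on maximality of stacked brick algebras. That theorem gives at once that \(\mathcal A_n\) is local maximal commutative, with Loewy signature \((2,5q+r,2)=(2,n-4,2)\) and \(\dim\mathcal A_n=5+4q+r\). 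In particular \(\mathcal A_n\subseteq M_{2+(n-4)+2}(K)=M_n(K)\).

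Next I rewrite the dimension in closed form. Since
\[
\frac{4(n-4)}{5}=4q+\frac{4r}{5},
\]
it suffices to show \(\lceil 4r/5\rceil=r\) for \(0\le r\le4\). This is a direct check: \(\lceil 0\rceil=0\), \(\lceil 0.8\rceil=1\), \(\lceil 1.6\rceil=2\), \(\lceil 2.4\rceil=3\), \(\lceil 3.2\rceil=4\). Equivalently, \(r-1<4r/5\le r\) holds precisely because \(r<5\). Hence \(\dim\mathcal A_n=5+\lceil 4(n-4)/5\rceil\).

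Finally, the Courter-like inequality. Using \(n=4+5q+r\), the inequality \(\dim\mathcal A_n<n\) reads \(5+4q+r<4+5q+r\), which is equivalent to \(q>1\). This holds since \(q\ge2\). The case \(n=14\), with \(q=2\) and \(r=0\), gives dimension \(13\) and signature \((2,10,2)\), matching Corollary~\ref{first_case}.

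There is no real obstacle in this proof itself. All the substantive content, namely commutativity of the stack and the vanishing of mixed centralizer blocks via Lemma~\ref{mixed_rigidity}, is already contained in the maximality theorem for \(\mathcal A_{q,r}\). The only points needing care are that \(q\ge 2\) (not merely \(q\ge1\)) is what makes the inequality strict, and the ceiling identity for the remainder \(r\).
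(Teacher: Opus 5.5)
Your proposal is correct and takes the same route as the paper. You invoke the maximality theorem for $\mathcal A_{q,r}$ to get locality, maximal commutativity, signature $(2,5q+r,2)$ and $\dim=5+4q+r$, and then use $0\le r<5$ to identify $4q+r$ with $\lceil 4(n-4)/5\rceil$; your reduction of $\dim\mathcal A_n<n$ to $5+4q+r<4+5q+r$, i.e.\ $q\ge 2$, spells out a step the paper only asserts.
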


\begin{proof}
	By repeated application of the stack construction theorem, $\mathcal A_n$
	is local maximal commutative. The signature is
	\[	(2,5q+r,2)=(2,n-4,2).	\]
	
	Each copy of \(\mathcal E\) contributes \(4\) generators, and each copy
	of \(\mathcal D\) contributes \(1\). Hence
	\[	\dim \mathcal A_n=5+4q+r.	\]
	
	Since \(0\le r<5\),
	\[	\left\lceil	\frac{4(5q+r)}5	\right\rceil	=	4q+r.	\]
	Therefore
	\[	\dim \mathcal A_n=5+\left\lceil	\frac{4(n-4)}5	\right\rceil.	\]
	
	Finally, for \(n\ge14\),
	\[
	5+
	\left\lceil
	\frac{4(n-4)}5
	\right\rceil<n,
	\]
	so every \(\mathcal A_n\) is Courter-like.
\end{proof}

\begin{remark}
	These examples are not claimed to be globally minimal for all large
	\(n\). For larger matrix sizes, smaller dimensions may be obtained by
	using signatures with larger outer Loewy layers. The present construction
	shows instead that the Courter phenomenon persists in every dimension
	\(n\ge14\).
\end{remark}

\begin{table}
	\caption{
		Explicit stack constructions showing that Courter-like algebras exist
		in every dimension \(14\le n\le 28\). The constructions use the brick
		algebra \(\mathcal E\), the diagonal brick \(\mathcal D\).
	}
\[
\begin{array}{c|c|c|c}
	n &  Bound & \mathrm{Optimal\; stack\; construction\;of\;} A_n& \dim A_n \\
	\hline
	14 & 13 & \mathcal E^{\star 2} & 13\\
	15 & 14 & \mathcal E^{\star 2}\star \mathcal D & 14\\
	16 & 15 & \mathcal E^{\star 2}\star \mathcal D^{\star 2} & 15\\
	17 & 16 & \mathcal E^{\star 2}\star \mathcal D^{\star 3} & 16\\
	18 & 17 & \mathcal E^{\star 2}\star \mathcal D^{\star 4} & 17\\
	19 & 17 & \mathcal E^{\star 3} & 17\\
	20 & 18 & \mathcal E^{\star 3}\star \mathcal D & 18\\
	21 & 19 & \mathcal E^{\star 3}\star \mathcal D^{\star 2} & 19\\
	22 & 20 & \mathcal E^{\star 3}\star \mathcal D^{\star 3} & 20\\
	23 & 21 & \mathcal E^{\star 3}\star \mathcal D^{\star 4} & 21\\
	24 & 21 & \mathcal E^{\star 4} & 21\\
	25 & 22 & \mathcal E^{\star 4}\star \mathcal D & 22\\
	26 & 23 & \mathcal E^{\star 4}\star \mathcal D^{\star 2} & 23\\
	27 & 24 & \mathcal E^{\star 4}\star \mathcal D^{\star 3} & 24\\
	28 & 25 & \mathcal E^{\star 4}\star \mathcal D^{\star 4} & 25
\end{array}
\]\end{table}

\begin{remark}
	Although the stack construction produces an explicit infinite family of
	Courter-like algebras, its asymptotic efficiency is not optimal.
	
	Indeed, for the family constructed above we have
	\[
	\dim A_n \sim \frac45\,n
	\qquad (n\to\infty).
	\]
	
	In contrast, the signature-free local bounds suggest a substantially
	smaller asymptotic scale of order $	n^{2/3}.$
	
	Thus the present construction establishes the persistence of the Courter
	phenomenon in infinitely many dimensions, but does not yet capture the
	asymptotically smallest maximal commutative algebras.
\end{remark}

\section*{Conclusion and further directions}

We established explicit local lower bounds for maximal commutative
subalgebras of matrix algebras in terms of the nilpotency degree of the
radical, and showed that nilpotency degree \(3\) yields the most
favorable bounds for the appearance of small-dimensional examples.

Since arbitrary maximal commutative algebras decompose into direct sums
of local ones, and asymptotic considerations suggest that direct sums are
unlikely to improve dimension savings, the extremal problem appears to be
essentially local.

In particular, no maximal commutative subalgebras
\(A\subseteq M_n(K)\) with \(\dim A<n\) exist for \(n\le13\), while
Courter's classical example in \(M_{14}(K)\) is the first exceptional
case.

We also introduced a stack construction showing that this phenomenon is
not isolated: for every \(n\ge14\), we explicitly construct a local
maximal commutative subalgebra of \(M_n(K)\) with dimension strictly
smaller than \(n\).

A natural next step is the classification of local maximal commutative
algebras satisfying \(\dim A<n\), and the determination of asymptotically
optimal constructions.

\section*{Appendix: Rank verification for the stack construction}

The following Mathematica code verifies the rank computations used in
Lemma~\ref{mixed_rigidity}. The four ranks correspond to the pairs
\((\mathcal E,\mathcal E)\), \((\mathcal E,\mathcal D)\),
\((\mathcal D,\mathcal E)\), and \((\mathcal D,\mathcal D)\).

\begin{lstlisting}
	ClearAll["Global`*"];
	
	(* Brick algebra E *)
	
	A1 = {{1, -1}, {0, 0}, {-1, 0}, {1, 0}, {0, -1}};
	A2 = {{0, 0}, {0, 1}, {0, 0}, {0, 1}, {-1, 2}};
	A3 = {{1, 0}, {0, 1}, {0, 1}, {0, -1}, {1, 0}};
	A4 = {{0, 1}, {-1, -1}, {0, -1}, {0, 2}, {0, 0}};
	
	B1 = {{0, 0, 1, 0, 0},
		{1, 0, -1, -1, 0}};
	
	B2 = {{-1, 0, 0, 1, 1},
		{-1, 0, 1, 2, 2}};
	
	B3 = {{-1, -1, -1, 0, 0},
		{1, 0, 1, 1, -1}};
	
	B4 = {{1, 0, 2, 1, 0},
		{0, 2, 0, 0, 0}};
	
	AE = {A1, A2, A3, A4};
	BE = {B1, B2, B3, B4};
	
	(* Diagonal brick D *)
	
	AD = {{{1, 0}}};
	BD = {{{1}, {0}}};
	
	(* Rank of the mixed rigidity system:
	T A_j^Q = 0,
	B_i^P T = 0
	*)
	
	mixedRank[AP_, BP_, AQ_, BQ_] := Module[
	{sP, sQ, vars, T, equations, coeffs},
	
	sP = Length[AP[[1]]];
	sQ = Length[AQ[[1]]];
	
	vars = Array[t, {sP, sQ}];
	T = vars;
	
	equations = Join[
	Flatten[Table[T . AQ[[j]], {j, Length[AQ]}]],
	Flatten[Table[BP[[i]] . T, {i, Length[BP]}]]
	];
	
	coeffs = Normal[
	CoefficientArrays[equations, Flatten[vars]][[2]]
	];
	
	MatrixRank[coeffs]
	];
	
	rankEE = mixedRank[AE, BE, AE, BE];
	rankED = mixedRank[AE, BE, AD, BD];
	rankDE = mixedRank[AD, BD, AE, BE];
	rankDD = mixedRank[AD, BD, AD, BD];
	
	{
		{"pair", "rank", "unknowns"},
		{"(E,E)", rankEE, 25},
		{"(E,D)", rankED, 5},
		{"(D,E)", rankDE, 5},
		{"(D,D)", rankDD, 1}
	}
\end{lstlisting}

\section*{Declarations}

\textbf{Funding} \\
The author received no external funding.

\textbf{Conflicts of interest} \\
The author declares that there is no conflict of interest.

\textbf{Data availability} \\
Data sharing is not applicable to this article as no datasets were generated or analyzed.

\textbf{Author contributions} \\
The author contributed solely to all aspects of this work.

\end{document}